\newcommand\cyr{%
\renewcommand\rmdefault{wncyr}%
\renewcommand\sfdefault{wncyss}%
\renewcommand\encodingdefault{OT2}%
\normalfont
\selectfont}
\DeclareTextFontCommand{\textcyr}{\cyr}
\DeclareFontFamily{OT1}{rsfs}{}
\DeclareFontShape{OT1}{rsfs}{n}{it}{<-> rsfs10}{}
\DeclareMathAlphabet{\mathscr}{OT1}{rsfs}{n}{it}
\numberwithin{equation}{section}
\newtheorem{theorem}{Theorem}[section]
\newtheorem{lemma}[theorem]{Lemma}
\newtheorem{proposition}[theorem]{Proposition}
\newtheorem{corollary}[theorem]{Corollary}
\newtheorem{claim}[theorem]{Claim}
\theoremstyle{definition}
\newtheorem{definition}[theorem]{Definition}
\newtheorem{remark}[theorem]{Remark}
\theoremstyle{remark}
\newtheorem{example}[theorem]{Example}
\newcommand{\im}{\operatorname{Im}}
\renewcommand{\ker}{\operatorname{Ker}}
\newcommand{\Spec}{\operatorname{Spec}}
\newcommand{\Ext}{\operatorname{Ext}}
\newcommand{\Hom}{\operatorname{Hom}}
\newcommand{\depth}{\operatorname{depth}}
\newcommand{\coker}{\operatorname{Coker}}
\newcommand{\fm}{\frak{m}}
\newcommand{\fp}{\frak{p}}
\begin{document}
\title[Deformation problem]{Frobenius actions on local cohomology modules and deformation}

\author[Linquan Ma]{Linquan Ma}
\address{Department of Mathematics, University of Utah, Salt Lake City, UT 84102, USA}
\email{lquanma@math.utah.edu}

\author[Pham Hung Quy]{Pham Hung Quy}
\address{Department of Mathematics, FPT University, and Thang Long Institute of Mathematics and Applied Sciences, Ha Noi, Viet Nam}
\email{quyph@fe.edu.vn}

\thanks{2010 {\em Mathematics Subject Classification\/}:13A35, 13D45, 14B05.\\
L. Ma is supported in part by the NSF Grant DMS \#1600198 and NSF CAREER Grant DMS \#1252860/1501102, and was partially supported by a Simons Travel Grant when preparing this article. P.H. Quy is partially supported by a fund of Vietnam National Foundation for Science and Technology Development (NAFOSTED) under grant number 101.04-2017.10. This paper was written while the second author was visiting Vietnam Institute for Advanced Study in Mathematics. He would like to thank the VIASM for hospitality and financial support.}

\keywords{Frobenius map, local cohomology, anti-nilpotent, $F$-injective, $F$-pure.}

\begin{abstract}
Let $(R, \fm)$ be a Noetherian local ring of characteristic $p>0$. We introduce and study $F$-full and $F$-anti-nilpotent singularities, both are defined in terms of the Frobenius actions on the local cohomology modules of $R$ supported at the maximal ideal. We prove that if $R/(x)$ is $F$-full or $F$-anti-nilpotent for a nonzerodivisor $x\in R$, then so is $R$. We use these results to obtain new cases on the deformation of $F$-injectivity.
\end{abstract}

\maketitle
\section{Introduction}
Let $(R, \fm)$ be a Noetherian local ring of prime characteristic $p>0$. We have the Frobenius endomorphism $F: R \to R, x \mapsto x^p$. The \textit{F-singularities} are certain singularities defined via this Frobenius map. They appear in the theory of \textit{tight closure} (cf. \cite{H96} for its introduction), which was systematically introduced by Hochster and Huneke \cite{HH90} and developed by many researchers, including Hara, Schwede, Smith, Takagi, Watanabe, Yoshida and others. A recent active research of $F$-singularities is centered around the correspondence with the singularities of the minimal model program. We recommend \cite{TW14} as an excellent survey for recent developments.

In this paper we study the deformation of $F$-singularities. That is, we consider the problem: if $R/(x)$ has certain property {\bf P} for a regular element $x\in R$, then does $R$ has the property {\bf P}? The classical objects of $F$-singularities are $F$-regularity, $F$-rationality, $F$-purity and $F$-injectivity (cf. \cite{H96,TW14}). It is well-known that $F$-rationality always deforms while $F$-regularity and $F$-purity do not deform in general \cite{Sin99a, Sin99b}. Whether $F$-injectivity deforms is a long standing open problem \cite{F83} (for recent developments, we refer to \cite{HMS14, MSS16}). Recall that the Frobenius endomorphism induces a natural Frobenius action on every local cohomology module, $F$: $H^i_{\fm}(R) \to H^i_{\fm} (R)$. The ring $R$ is called {\it $F$-injective} if this Frobenius action $F$ is injective for every $i \ge 0$. The class of $F$-injective singularities contains other classes of $F$-singularities. For an ideal-theoretic characterization of $F$-injectivity, see \cite[Main Theorem D]{QS16}. We consider this paper as a step towards a solution of the deformation of $F$-injectivity.

We introduce two conditions: $F$-full and $F$-anti-nilpotent singularities, in terms of the Frobenius actions on local cohomology modules of $R$ (we refer to section 2 for detailed definitions). The first condition is motivated by recent results on Du Bois singularities \cite{MSS16}. The second condition has been studied in \cite{EH08, M14}, and is known to be equivalent to {\it stably FH finite}, which means all local cohomology modules of $R$ and $R[[x_1,\dots,x_n]]$ supported at the maximal ideals have only finitely many Frobenius stable submodules. We prove that $F$-fullness and $F$-anti-nilpotency both deform, and we obtain more evidence on deformation of $F$-injectivity. Our results largely generalize earlier results of \cite{HMS14} in this direction. We list some of our main results here:

\begin{theorem}[Theorem \ref{stably}, Corollary \ref{str deform}]
$(R, \fm)$ be a Noetherian local ring of characteristic $p>0$ and $x$ a regular element of $R$. Then we have:
\begin{enumerate}
\item If $R/(x)$ is $F$-anti-nilpotent, then so is $R$.
\item If $R/(x)$ is $F$-full, then so is $R$.
\item If $R/(x)$ is $F$-full and $F$-injective, then so is $R$.
\end{enumerate}
\end{theorem}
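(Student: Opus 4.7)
The plan is to exploit the long exact sequence in local cohomology induced by $0 \to R \xrightarrow{x} R \to R/(x) \to 0$, which breaks into short exact sequences
\[
0 \longrightarrow H^i_{\fm}(R)/xH^i_{\fm}(R) \stackrel{\iota}{\longrightarrow} H^i_{\fm}(R/(x)) \stackrel{\delta}{\longrightarrow} \bigl(0 :_{H^{i+1}_{\fm}(R)} x\bigr) \longrightarrow 0.
\]
The key observation is that $xH^i_{\fm}(R)$ is Frobenius-stable, since $F(x\eta) = x^p F(\eta) \in xH^i_{\fm}(R)$, so $F$ descends to $H^i_{\fm}(R)/xH^i_{\fm}(R)$ and the inclusion $\iota$ is Frobenius-equivariant. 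Comparing the short exact sequences for $x$ and $x^p$ via the Frobenius on $R$, one sees that the connecting map $\delta$ intertwines Frobenius only up to a twist by $x^{p-1}$: namely $x^{p-1} F(\delta(\bar\eta)) = \delta(F(\bar\eta))$.

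For part (2), given $\eta \in H^i_{\fm}(R)$, the $F$-fullness of $R/(x)$ tells us that $\iota(\bar\eta) \in H^i_{\fm}(R/(x))$ is an $R$-combination of Frobenius images; by the Frobenius-equivariance of $\iota$ this pulls back to yield $\eta \in R\cdot F(H^i_{\fm}(R)) + xH^i_{\fm}(R)$. Iterating gives $\eta \in R\cdot F(H^i_{\fm}(R)) + x^kH^i_{\fm}(R)$ for every $k\ge 1$; since every element of $H^i_{\fm}(R)$ is killed by a power of $\fm$ and hence by a power of $x$, we conclude $\eta \in R\cdot F(H^i_{\fm}(R))$, so $R$ is $F$-full. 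For part (1), let $N\subseteq H^i_{\fm}(R)$ be $F$-stable with $F$ nilpotent on $N$. Its image in $H^i_{\fm}(R)/xH^i_{\fm}(R) \hookrightarrow H^i_{\fm}(R/(x))$ is $F$-stable and $F$-nilpotent, hence zero by assumption on $R/(x)$, giving $N\subseteq xH^i_{\fm}(R)$. Reducing to the case $F(N)=0$ (by replacing $N$ with $F^{e-1}(N)$ where $e$ is minimal with $F^e(N)=0$) and exploiting the twisted equivariance of $\delta$ applied to the companion sequence in degree $i-1$, one iterates to conclude $N\subseteq x^kH^i_{\fm}(R)$ for every $k$; the $x$-power torsion of individual elements then forces $N=0$.

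For part (3), $F$-fullness of $R$ is immediate from (2). For $F$-injectivity: if $F(\eta)=0$, then $\iota(\bar\eta) \in H^i_{\fm}(R/(x))$ satisfies $F(\iota(\bar\eta)) = 0$, forcing $\iota(\bar\eta) = 0$ by the $F$-injectivity of $R/(x)$, so $\eta \in xH^i_{\fm}(R)$. The now-established $F$-fullness of $R$ is crucial here: expressing $\eta$ as an $R$-combination of Frobenius images and iterating the descent drives $\eta$ into every $x^kH^i_{\fm}(R)$, and $x$-power torsion concludes $\eta=0$. Throughout, the main obstacle I anticipate is precisely the failure of $\delta$ to be Frobenius-equivariant: the $x^{p-1}$-twist in its Frobenius compatibility must be carefully tracked in the iterative descent, especially when upgrading "the image vanishes in the quotient" to "the object itself vanishes" in parts (1) and (3), where handling the accumulated twist across successive approximations forms the technical heart of the proof.
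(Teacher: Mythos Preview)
Your approach has a fundamental structural problem: you are working with the wrong piece of the long exact sequence. Under either hypothesis ($R/(x)$ $F$-full or $F$-anti-nilpotent), $x$ is a \emph{surjective element}, meaning $H^i_{\fm}(R)\xrightarrow{x}H^i_{\fm}(R)$ is surjective for every $i$ (this follows because the natural map $\mathscr{F}^e_R(H^i_\fm(R/(x)))\to H^i_\fm(R/(x))$ factors through $H^i_\fm(R/(x^{p^e}))$). Consequently $H^i_{\fm}(R)/xH^i_{\fm}(R)=0$ and your map $\iota$ is identically zero. Every step you run through $\iota$ is therefore vacuous: in part (2) the conclusion $\eta\in R\cdot F(H^i_\fm(R))+xH^i_\fm(R)=H^i_\fm(R)$ says nothing; in part (3) the conclusion $\eta\in x^kH^i_\fm(R)=H^i_\fm(R)$ likewise says nothing. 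Your final clause (``$x$-power torsion concludes $\eta=0$'') is simply false in this setting: each element of $H^i_\fm(R)$ is $x$-torsion, but $x^kH^i_\fm(R)$ does not shrink. Even ignoring surjectivity of $x$, your pull-back step in (2) is illegitimate: $F$-fullness of $R/(x)$ gives $\iota(\bar\eta)=\sum r_jF(\xi_j)$ with $\xi_j\in H^i_\fm(R/(x))$, but you need $\xi_j\in\im(\iota)$ to pull back, and $\im(\iota)$ being full is an anti-nilpotency statement, not a fullness statement.

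There is a second, independent gap in part (1): the condition you verify---``every $F$-stable $N$ with $F|_N$ nilpotent is zero''---is equivalent only to $F$-\emph{injectivity} of $H^i_\fm(R)$, not anti-nilpotency. Anti-nilpotency requires that for every $F$-stable $N$ the induced action on $H^i_\fm(R)/N$ be injective; taking $N=0$ recovers your condition, but the general case does not follow.

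The paper's argument instead exploits the connecting map $\phi\colon H^{i-1}_\fm(R/(x))\hookrightarrow H^i_\fm(R)$ (injective precisely because $x$ is surjective). Given an $F$-stable $N\subseteq H^i_\fm(R)$, one sets $L=\bigcap_t x^tN$ (which equals $x^nN$ for $n\gg 0$ by the Artinian property) and obtains a short exact sequence
\[
0\to H^{i-1}_\fm(R/(x))/\phi^{-1}(L)\to H^i_\fm(R)/L\xrightarrow{x} H^i_\fm(R)/L\to 0
\]
compatible with the twisted Frobenius $x^{p^e-1}F^e$. For (i), anti-nilpotency of $R/(x)$ forces $x^{p^e-1}F^e$ injective on $H^i_\fm(R)/L$, and since $x^{p^e-1}F^e(N)\subseteq L$ for $e\gg 0$ one gets $N=L$. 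For (ii), taking $N$ to be the $R$-span of $F(H^i_\fm(R))$, the middle map $x^{p^e-1}F^e$ is zero on $H^i_\fm(R)/L$ for $e\gg 0$, so $F$ is nilpotent on $H^{i-1}_\fm(R/(x))/\phi^{-1}(L)$; fullness of $R/(x)$ then forces $\phi^{-1}(L)=H^{i-1}_\fm(R/(x))$, whence $x$ is an isomorphism on the Artinian module $H^i_\fm(R)/L$, giving $L=H^i_\fm(R)$. Part (iii) follows by combining (ii) with the $x^{p-1}F$ injectivity argument on $H^i_\fm(R)$ (again via $\phi$, not $\iota$). Your intuition about the $x^{p-1}$-twist in $\delta$ is exactly right and is the heart of the matter; the error is in trying to run the argument through $\iota$ rather than through $\delta=\phi$.
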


\begin{theorem}[Theorem \ref{stri filt}]
Let $(R, \fm)$ be a Noetherian local ring of characteristic $p>0$. Suppose the residue field $k = R/\fm$ is perfect. Let $x$ be a regular element of $R$ such that $\coker (H^i_{\fm}(R) \overset{x}{\to} H^i_{\fm}(R))$  has finite length for every $i$. If $R/(x)$ is $F$-injective, then the map $x^{p-1}F$: $H_{\fm}^i(R)\to H_{\fm}^i(R)$ is injective for every $i$, in particular $R$ is $F$-injective.
\end{theorem}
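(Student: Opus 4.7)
My plan is to first establish a key commutative square relating the Frobenius on $H^{i-1}_\fm(R/(x))$ and the twisted Frobenius $x^{p-1}F$ on $H^i_\fm(R)$, then use the perfect residue field to promote $F$-injectivity to $F$-bijectivity on a certain finite length piece, and finally bootstrap through the $x$-torsion filtration of $H^i_\fm(R)$.

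The compatibility square
\[
\begin{array}{ccc}
H^{i-1}_\fm(R/(x)) & \xrightarrow{\partial} & H^i_\fm(R) \\
F\downarrow & & \downarrow x^{p-1} F \\
H^{i-1}_\fm(R/(x)) & \xrightarrow{\partial} & H^i_\fm(R)
\end{array}
\]
arises from composing two morphisms of short exact sequences: first $(F,F,\bar r \mapsto \overline{r^p})$ from $0 \to R \xrightarrow{x} R \to R/(x) \to 0$ up to $0 \to R \xrightarrow{x^p} R \to R/(x^p) \to 0$, then the natural map $(x^{p-1},\mathrm{id},\pi)$ back down; applying $\RGamma_\fm$ and using naturality of the connecting homomorphism yields the square. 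Here $\im(\partial) = 0:_{H^i_\fm(R)} x$ while $\ker(\partial)$ is the $F$-stable submodule $N := \coker(H^{i-1}_\fm(R)\xrightarrow{x} H^{i-1}_\fm(R))$ of $H^{i-1}_\fm(R/(x))$, of finite length by hypothesis.

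The key structural step is that $F$ acts bijectively on $N$. Injectivity is inherited from $F$-injectivity of $R/(x)$. For surjectivity, I view $F$ as an $R$-linear map $N \to F_*N$; since $k$ is perfect, $F_*k \cong k$ as $R$-modules (via $a \mapsto a^p$), so $\ell_R(F_*N) = \ell_R(N) < \infty$, and an injective $R$-linear map between modules of equal finite length is bijective. This is where both the perfect residue field and the finite length hypothesis on the cokernels are consumed, and I expect this to be the subtlest part of the argument.

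With the square and the bijectivity, injectivity of $x^{p-1}F$ on $0:_M x$ (where $M := H^i_\fm(R)$) follows: if $\eta = \partial(\bar\xi)$ satisfies $x^{p-1}F(\eta)=0$, then $\partial(F(\bar\xi)) = 0$, so $F(\bar\xi) \in N$; surjectivity of $F$ on $N$ provides $\bar\xi_0 \in N$ with $F(\bar\xi_0) = F(\bar\xi)$, and $F$-injectivity of $H^{i-1}_\fm(R/(x))$ forces $\bar\xi = \bar\xi_0 \in \ker\partial$, giving $\eta = 0$. To extend to all of $M = \bigcup_s (0:_M x^s)$, I induct on $s$: if $\eta \in 0:_M x^{s+1}$ satisfies $x^{p-1}F(\eta)=0$, then $x\eta \in 0:_M x^s$ and $x^{p-1}F(x\eta) = x^p \cdot x^{p-1}F(\eta) = 0$, so $x\eta = 0$ by induction, placing $\eta$ into $0:_M x$ and forcing $\eta = 0$ by the base case. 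Finally $F(\eta)=0$ implies $x^{p-1}F(\eta)=0$, so $R$ is $F$-injective.
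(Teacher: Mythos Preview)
Your proof is correct and follows essentially the same approach as the paper. The paper packages the key finite-length step as a separate lemma---showing that the induced Frobenius on $H^{i-1}_{\fm}(R/(x))/N$ is injective, which is equivalent to your bijectivity of $F$ on $N$---and then concludes by checking the socle of $\ker(x^{p-1}F)$ rather than inducting up the $x$-torsion filtration; but the underlying commutative square, the use of perfectness of $k$ on the finite-length piece, and the diagram chase are the same.
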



\section{Definitions and basic properties}

\subsection{Modules with Frobenius structure}
Let $(R,\fm)$ be a local ring of characteristic $p>0$. A Frobenius action on an $R$-module $M$, $F$: $M\rightarrow M$, is an additive map such that for all $u\in M$ and $r\in R$, $F(ru)=r^pu$. Such an action induces a natural $R$-linear map $\mathscr{F}_R(M)\to M$,\footnote{It is not hard to see that an $R$-linear map $\mathscr{F}_R(M)\to M$ also determines a Frobenius action on $M$.} where $\mathscr{F}_R(-)$ denotes the Peskine-Szpiro's Frobenius functor. We say $N$ is an {\it $F$-stable} submodule of $M$ if $F(N)\subseteq N$. We say the Frobenius action on $M$ is {\it nilpotent} if $F^e(M)=0$ for some $e$.

We note that having a Frobenius action on $M$ is the same as saying that $M$ is a left module over the ring $R\{F\}$, which may be viewed as
a noncommutative ring generated over $R$ by the symbols $1,F,F^2,\dots$ by requiring that $Fr=r^pF$ for $r\in R$. Moreover, $N$ is an $F$-stable submodule of $M$ is equivalent to requiring that $N$ is an $R\{F\}$-submodule of $M$. We will not use this viewpoint in this article though.

Let $M$ be an (typically Artinian) $R$-module with a Frobenius action $F$. We say the Frobenius action on $M$ is {\it full} (or simply $M$ is full), if the map $\mathscr{F}_R^e(M)\to M$ is surjective for some (equivalently, every) $e\geq 1$. This is the same as saying that the $R$-span of all the elements of the form $F^e(u)$ is the whole $M$ for some (equivalently, every) $e\geq 1$. We say the Frobenius action on $M$ is {\it anti-nilpotent} (or simply $M$ is anti-nilpotent), if for any $F$-stable submodule $N\subseteq M$, the induced Frobenius action $F$ on $M/N$ is injective (note that this in particular implies that $F$ acts injectively on $M$).

\begin{lemma}
\label{anti-nilp implies full}
The Frobenius action on $M$ is anti-nilpotent if and only if every $F$-stable submodule $N\subseteq M$ is full. In particular, if $M$ anti-nilpotent, then $M$ is full.
\end{lemma}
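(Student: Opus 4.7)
The plan is to prove the biconditional by showing each direction directly, using two natural operations on $F$-stable submodules: taking the $R$-span of $F(N)$, and taking the $F$-preimage $\{m\in M : F(m)\in N\}$. The ``in particular'' clause will then follow by specializing $N=M$, since $M$ is trivially $F$-stable and fullness of $M$ is exactly the definition of $M$ being full.

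For the forward direction, I would assume $M$ is anti-nilpotent and let $N\subseteq M$ be any $F$-stable submodule. Define $N_0$ to be the $R$-submodule of $M$ generated by $F(N)$; by the alternative description of fullness recalled in the preceding paragraph of the paper, showing $N$ is full amounts to showing $N=N_0$. First I would check that $N_0$ is $F$-stable in $M$: a generator $rF(n)$ with $n\in N$ maps under $F$ to $r^pF^2(n)$, and $F(n)\in N$ so $F^2(n)\in F(N)\subseteq N_0$. Now consider the quotient $M/N_0$; by anti-nilpotency its induced Frobenius is injective. But for any $n\in N$ we have $F(n)\in N_0$, so $F$ annihilates the image of $n$ in $M/N_0$; injectivity forces $n\in N_0$, hence $N\subseteq N_0$, and the reverse inclusion is obvious.

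For the converse, assume every $F$-stable submodule of $M$ is full, fix an $F$-stable $N\subseteq M$, and introduce $\widetilde N=\{m\in M : F(m)\in N\}$. The rules $F(m+m')=F(m)+F(m')$ and $F(rm)=r^pF(m)$ make $\widetilde N$ an $R$-submodule, and it contains $N$ because $F(N)\subseteq N$. It is also $F$-stable: if $F(m)\in N$, then $F(F(m))\in F(N)\subseteq N$, so $F(m)\in\widetilde N$. By hypothesis $\widetilde N$ is full, i.e.\ $\widetilde N$ equals the $R$-span of $F(\widetilde N)$; but by definition of $\widetilde N$ that span sits inside $N$, so $\widetilde N\subseteq N$ and hence $\widetilde N=N$. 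Unwinding the definition of $\widetilde N$, this says that the induced Frobenius on $M/N$ is injective, proving anti-nilpotency.

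There is no serious obstacle here: the whole argument is a tidy adjunction-style interplay between the operators $N\mapsto R\cdot F(N)$ and $N\mapsto F^{-1}(N)$ on $F$-stable submodules. The only thing requiring any care is checking that these two operators actually produce $F$-stable submodules of $M$ (not just of $N$), which is what makes both directions of the argument legitimately invoke the hypothesis on $M$; once that is verified the conclusions are forced by one-step diagram chases.
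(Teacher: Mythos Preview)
Your proof is correct, and the forward direction is identical to the paper's. For the converse the paper argues by contradiction---picking a single $y\notin N$ with $F(y)\in N$ and showing $N+Ry$ is $F$-stable but not full---whereas you take the entire preimage $\widetilde N=F^{-1}(N)$ at once and argue directly; these are minor repackagings of the same idea.
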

\begin{proof}
Suppose $M$ is anti-nilpotent. Let $N\subseteq M$ be an $F$-stable submodule. Consider the $R$-span of $F(N)$, call it $N'$. Clearly, $N'\subseteq N$ is another $F$-stable submodule of $M$ and $F(N)\subseteq N'$. But since $M$ is anti-nilpotent, $F$ acts injectively on $M/N'$. Thus we have $N=N'$ and hence $N$ is full.

Conversely, suppose every $F$-stable submodule of $M$ is full. Suppose there exists an $F$-stable submodule $N\subseteq M$ such that the Frobenius action on $M/N$ is not injective. Pick $y\notin N$ such that $F(y)\in N$. Let $N''=N+Ry$. It is clear that $N''$ is an $F$-stable submodule of $M$ and the $R$-span of $F(N'')$ is contained in $N\subsetneq N''$. This shows $N''$ is not full, a contradiction.
\end{proof}

We also mention that whenever $M$ is endowed with a Frobenius action $F$, then $\widetilde{F}=rF$ defines another Frobenius action on $M$ for every $r\in R$. It is easy to check that if the action $\widetilde{F}$ is full or anti-nilpotent, then so is $F$.

\subsection{$F$-singularities} We collect some definitions about singularities in positive characteristic. Let $(R, \fm)$ be a Noetherian local ring of characteristic $p>0$ with the Frobenius endomorphism $F: R \to R; x \mapsto x^p$. $R$ is called {\it $F$-finite} if $R$ is a finitely generated as an $R$-module via the homomorphism $F$. $R$ is called {\it $F$-pure} if the Frobenius endomorphism is pure.\footnote{A map of $R$-modules $N\rightarrow N'$ is {\it pure} if for every $R$-module $M$ the map $N\otimes_RM\rightarrow N'\otimes_RM$ is injective for every $R$-module $M$.} It is worth to note that if $R$ is either $F$-finite or complete, then $R$ being $F$-pure is equivalent to the condition that the Frobenius endomorphism $F: R \to R$ is split \cite{HR76}. Let $I = (x_1, \ldots, x_t)$ be an ideal of $R$. Then we denote by $H^i_I(R)$ the \textit{i-th local cohomology module} with support at $I$ (we refer to \cite{BS98} for the general theory of local cohomology modules). Recall that local cohomology may be computed as the cohomology of the \v{C}ech complex
$$
0 \to R \to \oplus_{i=1}^t R_{x_i} \to \cdots \to\oplus_{i=1}^tR_{x_1\cdots \widehat{x}_i\cdots x_t}\to R_{x_1 \cdots x_t} \to 0.
$$
The Frobenius endomorphism $F:R \to R$ induces a natural Frobenius action $F:H^i_I(R) \to H^i_{I^{[p]}}(R) \cong H^i_{I}(R)$. A local ring $(R,\fm)$ is called \textit{$F$-injective} if the Frobenius action on $H^i_{\fm}(R)$ is injective for all $i \ge 0$. This is the case if $R$ is $F$-pure \cite[Lemma 2.2]{HR76}. One can also characterize $F$-injectivity using certain ideal closure operations (see \cite{M15, QS16} for more details).


\begin{example}
Let $I = (x_1, \ldots, x_t)\subseteq R$ be an ideal generated by $t$ elements. By the above discussion we have $H^t_I(R) \cong R_{x_1 \cdots x_t}/\im(\oplus_{i=1}^tR_{x_1\cdots \widehat{x}_i\cdots x_t}\to R_{x_1\cdots x_t})$ and the natural Frobenius action on $H^t_I(R)$ sends $\frac{1}{x_1\cdots x_t}$ to $\frac{1}{x_1^p\cdots x_t^p}$. Therefore it is easy to see the Frobenius action on $H^t_I(R)$ is full (in fact, $\mathscr{F}_R(H^t_I(R))\to H^t_I(R)$ is an isomorphism). On the other hand, one cannot expect $H^t_I(R)$ is always anti-nilpotent even when $R$ is regular. For example, let $R = k[[x, y]]$ be a formal power series ring in two variables and $I = (x)$. We have $$H^1_{(x)}(R) \cong k[[y]]x^{-1} \oplus \cdots \oplus k[[y]] x^{-n} \oplus \cdots .$$
Let $N$ be the submodule of $H^1_{(x)}(R)$ generated by $\{y^2x^{-n}\}_{n=1}^\infty$, then it is easy to see $N$ is an $F$-stable submodule of $H^1_{(x)}(R)$. However $F(y x^{-1}) = y^p x^{-p}\in N$ while $yx^{-1}\notin N$. So the Frobenius action on $H^1_{(x)}(R)/N$ is not injective and hence $H^1_{(x)}(R)$ is not anti-nilpotent.
\end{example}

We will be mostly interested in the Frobenius actions on local cohomology modules of $R$ supported at the maximal ideal. We introduce two notions of $F$-singularities.

\begin{definition}
\begin{enumerate}
\item We say that $(R,\fm)$ is $F$-full, if the Frobenius action on $H_{\fm}^i(R)$ is full for every $i\geq 0$. This means $\mathscr{F}_R(H_{\fm}^i(R))\to H_{\fm}^i(R)$ is surjective for every $i\ge 0$.
\item We say that $(R,\fm)$ is $F$-anti-nilpotent, if the Frobenius action on $H^i_{\fm}(R)$ is anti-nilpotent for every $i \geq 0$.

\end{enumerate}
\end{definition}

The concept of $F$-anti-nilpotency is not new, it was introduced and studied in \cite{EH08} and \cite{M14} under the name {\it stably FH-finite}: that is, all local cohomology modules of $R$ and $R[[x_1,\dots,x_n]]$ supported at their maximal ideals have only finitely many $F$-stable submodules. It is a nontrivial result \cite[Theorem 4.15]{EH08} that this is equivalent to $R$ being $F$-anti-nilpotent. 

\begin{remark}
\begin{enumerate}

\item It is clear that $F$-anti-nilpotent implies $F$-injective and $F$-full (see Lemma \ref{anti-nilp implies full}). Moreover, $F$-pure local rings are $F$-anti-nilpotent \cite[Theorem 1.1]{M14}. In particular, $F$-pure local rings are $F$-full.

\item We can construct many $F$-anti-nilpotent (equivalently, stably FH-finite) rings that are not $F$-pure \cite[Sections 5 and 6]{QS16}.

\item Cohen-Macaulay rings are automatically $F$-full, since $\mathscr{F}_R(H_{\fm}^d(R))\to H_{\fm}^d(R)$ is an isomorphism. But even $F$-injective Cohen-Macaulay rings are not necessarily $F$-anti-nilpotent \cite[Example 2.16]{EH08}.


\end{enumerate}
\end{remark}

We give some simple examples of rings that are not $F$-full, we will see a family of such rings in Example \ref{family of nonfull}.

\begin{example}
\begin{enumerate}
\item Let $R=k[s^4, s^3t, st^3, t^4]$ where $k$ is a field of characteristic $p>0$. Then $R$ is a graded ring with $s^4, t^4$ a homogeneous system of parameters. A simple computation shows that the class $$[\frac{(s^3t)^2}{s^4}, -\frac{(st^3)^2}{t^4}]\in R_{s^4}\oplus R_{t^4}$$ spans the local cohomology module $H_{\fm}^1(R)$. In particular, $[H_{\fm}^1(R)]$ sits only in degree 2 and thus the natural Frobenius map kills $H_{\fm}^1(R)$. $R$ is not $F$-full.

\item Let $R=\frac{k[x,y,z]}{x^3+y^3+z^3}\# k[s,t]$ be the Segre product of $A=\frac{k[x,y,z]}{x^3+y^3+z^3}$ and $B=k[s,t]$, where $k$ is a field of characteristic $p>0$ with $p\equiv 2$ mod $3$. Then $R$ is a normal domain, since it is a direct summand of $A\otimes_k B=A[s,t]$. Moreover, a direct computation (for example see \cite[Example 4.11 and 4.16]{MSS16}) shows that $$H_{\fm_R}^2(R)=[H_{\fm_R}^2(R)]_0\cong [H_{\fm_A}^2(A)]_0=k.$$ Since $p\equiv 2$ mod 3, we know the natural Frobenius map kills $[H_{\fm_A}^2(A)]_0$. Hence $R$ is not $F$-full. On the other hand, if $p\equiv 1$ mod $3$, then it is well known that $R$ is $F$-pure (since $A$ is) and hence $F$-anti-nilpotent \cite[Theorem 1.1]{M14}.
\end{enumerate}
\end{example}

\begin{remark}\label{DB analog}
\begin{enumerate}
\item When $R$ is a homomorphic image of a regular ring $A$, say $R=A/I$, $R$ is $F$-full if and only if $H_{\fm}^i(A/J)\to H_{\fm}^i(A/I)$ is surjective for every $J\subseteq I\subseteq \sqrt{J}$. This is because by \cite[Lemma 2.2]{L06}, the $R$-span of $F^e(H_{\fm}^i(R))$ is the same as the image $H_{\fm}^i(A/I^{[p^e]})\to H_{\fm}^i(A/I)$, and for every $J\subseteq I\subseteq\sqrt{J}$, $I^{[p^e]}\subseteq J$ for $e\gg0$. As an application, when $R=A/I$ is $F$-full, we have $H_{\fm}^i(A/I) = 0$ provided $H_{\fm}^i(A/J) = 0$. Hence $\depth A/I \ge \depth A/J $ for every $J\subseteq I\subseteq\sqrt{J}$.

\item Suppose $R$ is a local ring essentially of finite type over $\mathbb{C}$ and $R$ is Du Bois (we refer to \cite{Sch09} or \cite{MSS16} for the definition and basic properties of Du Bois singularities). In this case we do have $H_{\fm}^i(A/J)\to H_{\fm}^i(A/I)$ is surjective for every $J\subseteq I=\sqrt{J}$ \cite[Lemma 3.3]{MSS16}. This is the main ingredient in proving singularities of dense $F$-injective type deform \cite[Theorem C]{MSS16}.

\item Since $F$-injective singularity is the conjectured characteristic $p>0$ analog of Du Bois singularity \cite{Sch09} \cite{BST16}, it is thus quite natural to ask whether $F$-injective local rings are always $F$-full? It turns out that this is false in general \cite[Example 3.5]{MSS16}. However, constructing such examples seems hard. In fact, \cite[Example 2.16]{EH08} (or its variants like \cite[Example 3.5]{MSS16}) is the only example we know that is $F$-injective but not $F$-anti-nilpotent.
\end{enumerate}
\end{remark}

The above remarks motivate us to introduce and study $F$-fullness and a stronger notion of $F$-injectivity (see section 5).

We end this subsection by proving that $F$-full rings localize. Note that it is proved in \cite[Theorem 5.10]{M14} that $F$-anti-nilpotent rings localize.

For convenience, we use $R^{(1)}$ to denote the target ring of the Frobenius map $R\overset{F}{\to} R^{(1)}$. If $M$ is an $R$-module, then $\Hom_R(R^{(1)}, M)$ has a structure of an $R^{(1)}$-module. We can then identify $R^{(1)}$ with $R$, and $\Hom_R(R^{(1)}, M)$ corresponds to an $R$-module which we call $F^\flat(M)$ (we refer to \cite[section 2.3]{BB11} for more details on this). When $R$ is $F$-finite, we have $\Hom_R(R^{(1)}, E_R)\cong E_{R^{(1)}}$ and $F^\flat(E)\cong E_R$, where $E_R$ denotes the injective hull of the residue field of $(R,\fm)$.

\begin{proposition}
Let $(R,\fm)$ be an $F$-finite and $F$-full local ring. Then $R_{\fp}$ is also $F$-full for every $\fp\in\Spec R$.
\end{proposition}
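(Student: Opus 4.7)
The plan is to apply Matlis duality and convert $F$-fullness into an injectivity statement about a Cartier-type map between finitely generated $R$-modules, where it will become essentially obvious under localization. The first key step is the following dual characterization of fullness: for any Artinian $R$-module $M$ with Frobenius structure map $\tau_M\colon\mathscr{F}_R(M)\to M$, Hom-tensor adjunction gives a canonical identification $(\mathscr{F}_R(M))^\vee\cong F^\flat(M^\vee)$, so Matlis-dualizing $\tau_M$ produces an $R$-linear map $\kappa_M\colon M^\vee\to F^\flat(M^\vee)$. Since $\mathscr{F}_R$ preserves Artinianness (as $R^{(1)}$ is finitely presented over $R$ by $F$-finiteness) and Matlis duality is a faithful exact anti-equivalence on Artinian modules, $M$ is full if and only if $\kappa_M$ is injective.

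I apply this to $M=H^i_\fm(R)$ for each $i$. By Gabber's theorem, the $F$-finite ring $R$ admits a normalized dualizing complex $\omega_R^\bullet$, and local duality identifies $K^i:=H^i_\fm(R)^\vee$ with the finitely generated $R$-module $H^{-i}(\omega_R^\bullet)$. The assumption that $R$ is $F$-full then translates to the statement that each Cartier map $\kappa^i\colon K^i\to F^\flat(K^i)$ is injective. Because $R^{(1)}$ is finitely presented over $R$, $F^\flat$ commutes with localization at any prime $\fp$, and localization is exact, so $(\kappa^i)_\fp\colon (K^i)_\fp\to F^\flat_{R_\fp}((K^i)_\fp)$ is again injective. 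Now $(\omega_R^\bullet)_\fp$ is a dualizing complex for $R_\fp$ (up to a cohomological shift by $\dim R/\fp$), so local duality on $R_\fp$ identifies each $(K^i)_\fp$ with the $R_\fp$-Matlis dual of some $H^j_{\fp R_\fp}(R_\fp)$; applying the equivalence of the previous paragraph in reverse gives that each $H^j_{\fp R_\fp}(R_\fp)$ is full, so $R_\fp$ is $F$-full.

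The main technical obstacle is the compatibility of these constructions with localization: one must verify that, under the identifications coming from local duality, $(\kappa^i)_\fp$ really is the Cartier map of $R_\fp$ associated to the Frobenius action on $H^j_{\fp R_\fp}(R_\fp)$. This amounts to the naturality of local duality and of the trace map on $\omega_R^\bullet$ (Matlis dual to the Frobenius action on $H^\bullet_\fm(R)$) under localization, together with careful bookkeeping of the cohomological shift used to normalize $(\omega_R^\bullet)_\fp$. Once these naturality statements are in hand, the rest of the argument reduces to the trivial fact that localization preserves injectivity of maps between finitely generated modules.
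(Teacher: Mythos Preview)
Your argument is essentially the same as the paper's: both dualize the Frobenius structure map on $H^i_\fm(R)$ via local duality, turning $F$-fullness into injectivity of a map $K^i\to F^\flat(K^i)$ between finitely generated modules, and then observe that this injectivity localizes. The only cosmetic difference is that the paper realizes $K^i$ concretely as $\Ext_A^{n-i}(R,A)$ for an ambient regular ring $A$ (again via Gabber), whereas you phrase it through a normalized dualizing complex $\omega_R^\bullet$; your explicit acknowledgment of the compatibility of the Cartier/trace map with localization is a point the paper leaves implicit.
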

\begin{proof}
By a result of Gabber \cite[Remark 13.6]{G04}, $R$ is a homomorphic image of a regular ring $A$. Let $n=\dim A$. We have
\begin{eqnarray*}
&& \Hom_{R^{(1)}}(\Hom_R(R^{(1)}, \Ext_A^{n-i}(R, A)), E_{R^{(1)}})\\
&\cong& \Hom_{R^{(1)}}(\Hom_R(R^{(1)}, \Ext_A^{n-i}(R, A)),  \Hom_R(R^{(1)}, E_R))\\
&\cong& \Hom_R(\Hom_R(R^{(1)}, \Ext_A^{n-i}(R, A)), E_R)\\
&\cong& R^{(1)}\otimes \Hom_R(\Ext_A^{n-i}(R, A)), E_R)\\
&\cong& R^{(1)}\otimes H_{\fm}^i(R)
\end{eqnarray*}
where the last isomorphism is by local duality. Thus after identifying $R^{(1)}$ with $R$, we have $\mathscr{F}_R(H_{\fm}^i(R))$ is the Matlis dual of $F^\flat(\Ext_A^{n-i}(R, A))$. So $\mathscr{F}_R(H_{\fm}^i(R))\to H^i_{\fm}(R)$ is surjective for every $i$ if and only if $\Ext_A^{n-i}(R, A)\to F^\flat(\Ext_A^{n-i}(R, A))$ is injective for every $i$. The latter condition clearly localizes. So $R$ is $F$-full implies $R_{\fp}$ is $F$-full for every $\fp\in\Spec R$.
\end{proof}

\section{On surjective elements}

The following definition was introduced in \cite{HMS14} and was the key tool in \cite{HMS14}.

\begin{definition}Let $(R, \frak m)$ be a Noetherian local ring and $x$ a regular element of $R$. \rm $x$ is called a {\it surjective element} if the natural map on the local cohomology module $H^i_{\frak m}(R/(x^n)) \to H^i_{\frak m}(R/(x))$ induced by $R/(x^n) \to R/(x)$ is surjective for all $n>0$ and $i \ge 0$.
\end{definition}

The next proposition is a restatement of \cite[Lemma 3.2]{HMS14}, so we omit the proof.

\begin{proposition}\label{L2} The following are equivalent:
\begin{enumerate}[{(i)}]\rm
\item {\it $x$ is a surjective element.}
\item {\it For all $0 < h \le k$ the multiplication map
$$R/(x^h) \overset{x^{k-h}}{\to}R/(x^k)$$
induces an injection
$$H^i_{\frak m}(R/(x^h)) \to H^i_{\frak m}(R/(x^k))$$
for each $i \ge 0$.
}
\item {\it For all $0 < h \le k$ the short exact sequence
$$0 \to R/(x^h) \overset{x^{k-h}}{\to}R/(x^k) \to R/(x^{k-h}) \to 0$$
induces a short exact sequence
$$0 \to H^i_{\frak m}(R/(x^h)) \to H^i_{\frak m}(R/(x^k)) \to H^i_{\frak m}(R/(x^{k-h})) \to 0$$
for each $i \ge 0$.
}
\end{enumerate}
\end{proposition}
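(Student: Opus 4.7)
My plan is to establish the cycle (iii) $\Rightarrow$ (i) $\Rightarrow$ (ii) $\Rightarrow$ (iii), with each implication flowing from the long exact sequence in local cohomology attached to
\[
0 \longrightarrow R/(x^h) \overset{x^{k-h}}{\longrightarrow} R/(x^k) \longrightarrow R/(x^{k-h}) \longrightarrow 0, \qquad (\dagger)
\]
which is short exact because $x$ is a nonzerodivisor.

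The equivalence (ii) $\iff$ (iii) is essentially formal. Writing the long exact sequence from $(\dagger)$ as
\[
\cdots \to H^{i-1}_{\fm}(R/(x^{k-h})) \overset{\delta^i}{\to} H^i_{\fm}(R/(x^h)) \overset{\alpha^i}{\to} H^i_{\fm}(R/(x^k)) \overset{\beta^i}{\to} H^i_{\fm}(R/(x^{k-h})) \to \cdots,
\]
condition (ii) asserts that $\alpha^i$ is injective for every $i$, which is equivalent to $\delta^i=0$ for every $i$, equivalently to $\beta^{i-1}$ being surjective for every $i$. Together these yield the short exactness demanded by (iii), and conversely (iii) trivially contains (ii).

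The implication (iii) $\Rightarrow$ (i) is a matter of composing surjections. Applying (iii) with $h=1$ and $k=j$ for each $j$ between $2$ and $n$ produces surjections $H^i_{\fm}(R/(x^j)) \twoheadrightarrow H^i_{\fm}(R/(x^{j-1}))$ induced by the natural quotient maps; composing these for $j=n,n-1,\ldots,2$ yields the surjection $H^i_{\fm}(R/(x^n)) \twoheadrightarrow H^i_{\fm}(R/(x))$ required by (i).

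For (i) $\Rightarrow$ (ii), I would induct on $k$, with trivial base $k=1$. The crucial inductive step is the case $h=k-1$: here $(\dagger)$ reads $0 \to R/(x^{k-1}) \overset{x}{\to} R/(x^k) \to R/(x) \to 0$, and since (i) forces $H^i_{\fm}(R/(x^k)) \twoheadrightarrow H^i_{\fm}(R/(x))$, the associated connecting map vanishes, delivering the injection $H^i_{\fm}(R/(x^{k-1})) \hookrightarrow H^i_{\fm}(R/(x^k))$. For arbitrary $h<k-1$ one factors the multiplication map as $R/(x^h) \overset{x^{k-1-h}}{\to} R/(x^{k-1}) \overset{x}{\to} R/(x^k)$: the first leg induces an injection on $H^i_{\fm}$ by the inductive hypothesis applied at $k-1$, the second by the $h=k-1$ case just handled, and the composition is therefore injective. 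There is no genuine obstacle in this argument; the only care needed is the bookkeeping in the factorization, so that the composite truly equals multiplication by $x^{k-h}$ rather than some other power.
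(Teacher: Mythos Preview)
Your argument is correct. The paper itself does not supply a proof: it simply observes that the proposition is a restatement of \cite[Lemma 3.2]{HMS14} and omits the details. Your cycle (iii) $\Rightarrow$ (i) $\Rightarrow$ (ii) $\Rightarrow$ (iii) via the long exact sequence of $(\dagger)$ and the inductive factorization is the standard route to this result, and there are no gaps.
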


\begin{proposition}\label{surj} The following are equivalent:
\begin{enumerate}[{(i)}]\rm
\item {\it $x$ is a surjective element.}
\item {\it The multiplication map $H^i_{\frak m}(R) \overset{x}{\to} H^i_{\frak m}(R)$ is surjective for all $i \ge 0$.}
\end{enumerate}
\end{proposition}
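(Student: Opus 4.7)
The plan is to exploit the long exact sequences of local cohomology arising from $0\to R \overset{x^n}{\to} R \to R/(x^n) \to 0$, together with the short exact sequences $0\to R/(x^h) \overset{x^{k-h}}{\to} R/(x^k) \to R/(x^{k-h}) \to 0$, using Proposition \ref{L2} to shuttle between the different incarnations of surjectivity. The elementary input is that $H^i_{\fm}(R)$ is $\fm$-torsion, hence $x$-torsion, so $H^i_{\fm}(R) = \bigcup_{n\ge 1} (0:_{H^i_{\fm}(R)} x^n)$.

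For (ii) $\Rightarrow$ (i), assume multiplication by $x$ is surjective on $H^i_{\fm}(R)$ for every $i$; then so is multiplication by $x^n$. The long exact sequence collapses to $H^i_{\fm}(R/(x^n)) \cong (0:_{H^{i+1}_{\fm}(R)} x^n)$. The natural map of short exact sequences from $(R \overset{x^n}{\to} R \to R/(x^n))$ down to $(R \overset{x}{\to} R \to R/(x))$, whose vertical maps are $x^{n-1}$ on the left and $\id$ in the middle, identifies the natural map $H^i_{\fm}(R/(x^n)) \to H^i_{\fm}(R/(x))$ with the map $(0:_{H^{i+1}_{\fm}(R)} x^n) \to (0:_{H^{i+1}_{\fm}(R)} x)$ given by multiplication by $x^{n-1}$. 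Surjectivity is then immediate: any $v$ killed by $x$ lifts, via surjectivity of $x^{n-1}$ on $H^{i+1}_{\fm}(R)$, to some $u'$ with $x^{n-1}u'=v$, and then $x^n u' = xv = 0$, so $u'\in(0:_{H^{i+1}_{\fm}(R)} x^n)$.

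For (i) $\Rightarrow$ (ii), I would reduce showing $x$ is surjective on $H^i_{\fm}(R)$ to proving, for every $n\ge 0$, that $x\colon (0:_{H^i_{\fm}(R)} x^{n+1}) \to (0:_{H^i_{\fm}(R)} x^n)$ is surjective; the $x$-torsion filtration then exhausts $H^i_{\fm}(R)$. Identifying these kernels with $\im(\delta_n)$ for the connecting maps $\delta_n\colon H^{i-1}_{\fm}(R/(x^n)) \to H^i_{\fm}(R)$, the map of long exact sequences induced by the natural quotient $R/(x^{n+1}) \to R/(x^n)$ yields the identity $x\cdot \delta_{n+1} = \delta_n \circ \pi_{n+1,n}$, where $\pi_{n+1,n}\colon H^{i-1}_{\fm}(R/(x^{n+1}))\to H^{i-1}_{\fm}(R/(x^n))$ is the induced map. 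It therefore suffices to show $\pi_{n+1,n}$ is surjective. Applying Proposition \ref{L2}(iii) with $h=1$, $k=n+1$ to the short exact sequence $0\to R/(x) \overset{x^n}{\to} R/(x^{n+1}) \to R/(x^n) \to 0$ furnishes precisely
$$0\to H^{i-1}_{\fm}(R/(x)) \to H^{i-1}_{\fm}(R/(x^{n+1})) \to H^{i-1}_{\fm}(R/(x^n)) \to 0,$$
giving the required surjectivity.

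The main obstacle I anticipate is the bookkeeping: keeping straight the compatible ladders of long exact sequences, tracking which vertical map is $x^{n-1}$ versus $\id$, and choosing correctly between the two a priori short exact sequences $0 \to R/(x^n) \overset{x}{\to} R/(x^{n+1}) \to R/(x) \to 0$ and $0 \to R/(x) \overset{x^n}{\to} R/(x^{n+1}) \to R/(x^n) \to 0$ when invoking Proposition \ref{L2}(iii).
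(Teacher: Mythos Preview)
Your proof is correct. Both directions go through as you describe: in (ii)$\Rightarrow$(i) the identification of $\delta_n$ with an isomorphism onto $(0:_{H^{i+1}_{\fm}(R)}x^n)$ and the ladder computation showing $\pi$ becomes multiplication by $x^{n-1}$ are fine; in (i)$\Rightarrow$(ii) the identity $x\cdot\delta_{n+1}=\delta_n\circ\pi_{n+1,n}$ together with Proposition~\ref{L2}(iii) for $h=1$, $k=n+1$ gives exactly the surjectivity of $x$ on each $(0:x^n)$, and the $x$-torsion exhaustion finishes it. (The case $i=0$ is vacuous since $x$ is regular, so $H^0_{\fm}(R)=0$; you might note this in passing.)

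The route differs from the paper's. The paper argues more uniformly: by Proposition~\ref{L2}, $x$ is surjective if and only if all transition maps in the directed system $\{H^i_{\fm}(R/(x^h))\}_h$ are injective, which in turn is equivalent to injectivity of each map $\phi_h$ from $H^i_{\fm}(R/(x^h))$ into the direct limit $\varinjlim_h H^i_{\fm}(R/(x^h))\cong H^{i+1}_{\fm}(R)$. The paper then identifies $\phi_h$ with the connecting map in the long exact sequence for $0\to R\xrightarrow{x^h}R\to R/(x^h)\to 0$, so its injectivity is exactly surjectivity of $x^h$ on $H^{i+1}_{\fm}(R)$. This yields both implications at once via a single chain of equivalences. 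Your argument trades that uniformity for concreteness: you treat the two directions separately, never invoke the direct-limit identification, and instead work explicitly with the $x$-torsion filtration and the ladder of connecting maps. The paper's version is shorter and highlights the role of $\varinjlim_h H^i_{\fm}(R/(x^h))\cong H^{i+1}_{\fm}(R)$; yours is more elementary and makes the mechanism transparent without that identification.
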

\begin{proof}
By Proposition \ref{L2}, $x$ is a surjective element if and only if all maps in the direct limit system $\{ H^i_{\frak m}(R/(x^h)) \}_{h \ge 1}$ are injective. This is equivalent to the condition
$$\phi_h : H^i_{\frak m}(R/(x^h)) \to \varinjlim_h H^i_{\frak m}(R/(x^h)) \cong H^i_{\frak m}(H^1_{(x)}(R)) \cong H^{i+1}_{\frak m}(R) $$
is injective for all $h \ge 1$ and all $i \ge 0$ (the last isomorphism comes from an easy computation using local cohomology spectral sequences and noting that $x$ is a nonzerodivisor on $R$, see also \cite[Lemma 2.2]{HMS14}).
\begin{claim}
$\phi_h$ is exactly the connection maps in the long exact sequence of local cohomology induced by $0\to R\xrightarrow{\cdot x^h} R\to R/(x^h)\to 0$:
\[
\cdots \to H^i_{\frak m}(R/(x^h)) \overset{\phi_h}{\to} H^{i+1}_{\frak m}(R) \overset{x^h}{\to} H^{i+1}_{\frak m}(R) \to \cdots
 \]
\end{claim}
\begin{proof}[Proof of claim]
Observe that by definition, $\phi_h$ is the natural map in the long exact sequence of local cohomology
$$\cdots\to H_{\fm}^i(R/(x^h))\xrightarrow{\phi_h} H_{\fm}^i(R_x/R) \xrightarrow{\cdot x} H_{\fm}^i(R_x/R)\to\cdots$$
which is induced by $0\to R/(x^h)\to R_x/R \xrightarrow{\cdot x^h} R_x/R \to 0$ (note that $x^h$ is a nonzerodivisor on $R$ and $H_x^1(R)\cong R_x/R$). However, it is easy to see that the multiplication by $x^h$ map $H_{\fm}^i(R_x/R) \xrightarrow{\cdot x^h} H_{\fm}^i(R_x/R)$ can be identified with the multiplication by $x^h$ map $H_{\fm}^{i+1}(R)\xrightarrow{\cdot x^h} H_{\fm}^{i+1}(R)$ because we have a natural identification $H_{\fm}^i(R_x/R)\cong H_{\fm}^i(H_x^1(R))\cong H_{\fm}^{i+1}(R)$ (see for example \cite[Lemma 2.2]{HMS14}). This finishes the proof of the claim.
\end{proof}
From the claim it is immediate that $x$ is a surjective element if and only if the long exact sequence splits into short exact sequences:
$$0 \to H^i_{\frak m}(R/(x^h)) \to H^{i+1}_{\frak m}(R) \overset{x^h}{\to} H^{i+1}_{\frak m}(R) \to 0.$$
But this is equivalent to saying that the multiplication map $H^i_{\frak m}(R) \overset{x^h}{\to} H^i_{\frak m}(R)$ is surjective for all $h\ge 1$ and $i \ge 0$, and also equivalent to $H^i_{\frak m}(R) \overset{x}{\to} H^i_{\frak m}(R)$ is surjective for all $i\geq 0$.
\end{proof}

We next link the notion of surjective element with $F$-fullness. This is inspired by \cite{MSS16, SW07}.

\begin{proposition}
\label{full imp surj}
Let $x$ be a regular element of $(R,\fm)$. If $R/(x)$ is $F$-full, then $x$ is a surjective element. In particular, if $R/(x)$ is $F$-anti-nilpotent, then $x$ is a surjective element.
\end{proposition}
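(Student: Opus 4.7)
The plan is to show that for every $i\ge 0$ and every $e\ge 1$, the natural surjection $\pi_e: R/(x^{p^e})\to R/(x)$ induces a surjection on local cohomology $H^i_\fm(R/(x^{p^e}))\to H^i_\fm(R/(x))$. Once this is established, for any $h\ge 1$ we pick $e$ large enough that $p^e\ge h$; since $\pi_e$ factors through the quotient $R/(x^{p^e})\to R/(x^h)\to R/(x)$, we get that $H^i_\fm(R/(x^h))\to H^i_\fm(R/(x))$ is surjective, which is exactly the definition of $x$ being a surjective element.

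The heart of the argument is a factorization of the Frobenius action on $H^i_\fm(R/(x))$ through $H^i_\fm(R/(x^{p^e}))$. Because $R$ has characteristic $p$, the freshman's dream gives $(r+sx)^{p^e}=r^{p^e}+s^{p^e}x^{p^e}$, so the assignment $\bar r\mapsto \overline{r^{p^e}}$ defines a well-posed additive map $\bar F^e: R/(x)\to R/(x^{p^e})$. Functoriality of $H^i_\fm$ then gives an additive factorization
\[
H^i_\fm(R/(x))\xrightarrow{\bar F^e}H^i_\fm(R/(x^{p^e}))\xrightarrow{\pi_e}H^i_\fm(R/(x))
\]
whose composite is precisely the $e$-th iterate $F^e$ of the Frobenius action on $H^i_\fm(R/(x))$.

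Now $\pi_e$ is genuinely $R$-linear, so its image is an $R$-submodule of $H^i_\fm(R/(x))$ containing $F^e(H^i_\fm(R/(x)))$, and therefore containing the whole $R$-span of $F^e(H^i_\fm(R/(x)))$. By hypothesis $R/(x)$ is $F$-full, so $\mathscr{F}^e_R(H^i_\fm(R/(x)))\to H^i_\fm(R/(x))$ is surjective, which (as noted in Section 2) is equivalent to saying that this $R$-span equals all of $H^i_\fm(R/(x))$. Thus $\pi_e$ is surjective, completing the argument as outlined in the first paragraph. The ``in particular'' assertion follows immediately from Lemma \ref{anti-nilp implies full}, which gives that $F$-anti-nilpotence implies $F$-fullness.

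The only non-routine ingredient is the factorization of $F^e$ through $H^i_\fm(R/(x^{p^e}))$; the rest is a straightforward combination of $F$-fullness with the trivial observation that the quotients $R/(x^{p^e})\twoheadrightarrow R/(x^h)$ are all compatible. I expect no real obstacle beyond checking that $\bar F^e$ is well defined, which is what the principal-ideal identity $(x)^{[p^e]}=(x^{p^e})$ and the characteristic $p$ binomial identity guarantee.
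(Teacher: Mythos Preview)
Your proof is correct and follows essentially the same route as the paper's. Both arguments hinge on the factorization of the Frobenius action on $H^i_\fm(R/(x))$ through $H^i_\fm(R/(x^{p^e}))$; the paper expresses this via the Peskine--Szpiro functor (noting that $\mathscr{F}^e_R(H^i_\fm(R/(x)))\to H^i_\fm(R/(x))$ factors through $H^i_\fm(R/(x^{p^e}))$, after first reducing $\mathscr{F}^e_R$ to $\mathscr{F}^e_{R/(x)}$), while you build the factorization concretely from the well-defined map $\bar F^e: R/(x)\to R/(x^{p^e})$, $\bar r\mapsto \overline{r^{p^e}}$. One small notational slip: when you invoke $F$-fullness of $R/(x)$ you write $\mathscr{F}^e_R$ rather than $\mathscr{F}^e_{R/(x)}$, but since $H^i_\fm(R/(x))$ is an $R/(x)$-module the $R$-span and the $R/(x)$-span of $F^e(H^i_\fm(R/(x)))$ agree, so this is harmless.
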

\begin{proof}
We have natural maps: $$\mathscr{F}^e_R(H_{\fm}^i(R/(x)))\xrightarrow{\alpha_e} R/(x)\otimes_R\mathscr{F}^e_{R}(H_{\fm}^i(R/(x)))\cong  \mathscr{F}^e_{R/(x)}(H_{\fm}^i(R/(x)))\xrightarrow{\beta_e}H_{\fm}^i(R/(x)).$$
If $R/(x)$ is $F$-full, then $\beta_e$ is surjective for every $e$. Since $\alpha_e$ is always surjective, the natural map $\mathscr{F}^e_R(H_{\fm}^i(R/(x)))\to H_{\fm}^i(R/(x))$ is surjective for every $e$. Now simply notice that for every $e>0$, the map $\mathscr{F}_R^e(H_{\fm}^i(R/(x)))\to H_{\fm}^i(R/(x))$ factors through $H_{\fm}^i(R/(x^{p^e}))\to H_{\fm}^i(R/(x))$, so $H_{\fm}^i(R/(x^{p^e}))\to H_{\fm}^i(R/(x))$ is surjective for every $e>0$. This clearly implies that $x$ is a surjective element.
\end{proof}


The above propositions allow us to construct a family of non $F$-full local rings:

\begin{example}
\label{family of nonfull}
Let $(R,\fm)$ be a local ring with finite length cohomology, i.e., $H_{\fm}^i(R)$ has finite length for every $i<\dim R$ (under mild conditions, this is equivalent to saying that $R$ is Cohen-Macaulay on the punctured spectrum). Let $x$ be an arbitrary regular element in $R$. If $R$ is not Cohen-Macaulay, then we claim that $R/(x)$ is not $F$-full (and hence not $F$-anti-nilpotent). For suppose it is, then $x$ is a surjective element by Proposition \ref{full imp surj}, hence $H_{\fm}^i(R)\overset{x}{\to} H_{\fm}^i(R)$ is surjective for every $i$ by Proposition \ref{surj}. But since $R$ has finite length cohomology, we also know that a power of $x$ annihilates $H_{\fm}^i(R)$ for every $i<\dim R$. This implies $H_{\fm}^i(R)=0$ for every $i<\dim R$. So $R$ is Cohen-Macaulay, a contradiction.
\end{example}

We learned the following argument from \cite[Lemma A.1]{HMS14}. Since it is a crucial technique of this paper, we provide a detailed proof.

\begin{proposition}\label{crutical tech} Let $(R, \fm)$ be a local ring of prime characteristic $p$ and $x$ a regular element of $R$. Let $s$ be a positive integer such that the map $H^{s-1}_{\fm}(R) \overset{x}{\to} H^{s-1}_{\fm}(R)$ is surjective and the Frobenius action on $H^{s-1}_{\fm}(R/(x))$ is injective, then the map
$$H^{s}_{\fm}(R) \overset{x^{p-1}F}{\longrightarrow} H^{s}_{\fm}(R)$$
is injective.
\end{proposition}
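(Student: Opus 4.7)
The plan is to reduce the injectivity of $x^{p-1}F$ on $C := H^s_{\fm}(R)$ to its injectivity on the $x$-torsion submodule $\ker(\cdot x) \subseteq C$, and handle that base case via a single commutation identity. Let $\delta: H^{s-1}_{\fm}(R/(x)) \to C$ denote the connecting map of the short exact sequence $0 \to R \xrightarrow{\cdot x} R \to R/(x) \to 0$. Because $\cdot x$ is surjective on $H^{s-1}_{\fm}(R)$, the long exact sequence forces $\delta$ to be injective with image exactly $\ker(\cdot x : C \to C)$.

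The crucial identity I would establish is
\[
x^{p-1} F \circ \delta \;=\; \delta \circ \bar{F}
\]
as maps $H^{s-1}_{\fm}(R/(x)) \to C$, where $\bar{F}$ is the induced Frobenius on $R/(x)$. To prove it, I compare the first short exact sequence to $0 \to R \xrightarrow{\cdot x^p} R \to R/(x^p) \to 0$ via two different morphisms: the Frobenius pair $(F,F)$ and the pair $(\id, \cdot x^{p-1})$. These induce, on the cokernels, the maps $F_0: R/(x) \to R/(x^p)$, $r+(x) \mapsto r^p+(x^p)$, and $\cdot x^{p-1}: R/(x) \to R/(x^p)$, respectively. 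Writing $\delta'$ for the connecting map of the second sequence, naturality yields $F \circ \delta = \delta' \circ F_0$ and $\delta = \delta' \circ (\cdot x^{p-1})_*$. Combined with the factorization $\bar{F} = \pi_* \circ F_0$, where $\pi: R/(x^p) \to R/(x)$ is the natural surjection, and the obvious identity $(\cdot x^{p-1})_* \circ \pi_* = \cdot x^{p-1}$ on $H^{s-1}_{\fm}(R/(x^p))$, the asserted equality drops out from the $R$-linearity of $\delta'$. Given the identity, if $\eta = \delta(\alpha) \in \ker(\cdot x)$ satisfies $x^{p-1}F(\eta) = 0$, then $\delta(\bar{F}(\alpha)) = 0$, so $\bar{F}(\alpha) = 0$ by injectivity of $\delta$, hence $\alpha = 0$ by the hypothesis on $\bar{F}$, and therefore $\eta = 0$.

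For a general $\eta \in C$ with $x^{p-1}F(\eta) = 0$, since $C$ is $\fm$-power torsion and $x \in \fm$, there is a minimal $n \geq 0$ with $x^n \eta = 0$. I would proceed by induction on $n$. The cases $n \leq 1$ are handled in the previous paragraph. For $n \geq 2$, set $\eta' := x\eta$. Then $x^{n-1}\eta' = 0$, and using $F \circ (\cdot x) = (\cdot x^p) \circ F$ one computes
\[
x^{p-1}F(\eta') \;=\; x^{p-1}F(x\eta) \;=\; x^{2p-1}F(\eta) \;=\; x^p \cdot \bigl(x^{p-1} F(\eta)\bigr) \;=\; 0,
\]
so the inductive hypothesis gives $x\eta = 0$, placing $\eta$ in $\ker(\cdot x)$ and reducing to the base case.

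The main obstacle, and the conceptual heart of the argument, is the careful verification of the identity $x^{p-1}F \circ \delta = \delta \circ \bar{F}$ by chasing the two morphisms between the short exact sequences associated with $\cdot x$ and $\cdot x^p$; one must pay attention to which squares are commutative strictly (giving $F \circ \delta = \delta' \circ F_0$) versus which hold only after post-composing with $\delta'$ (as with $\delta = \delta' \circ (\cdot x^{p-1})_*$, where the underlying maps $R/(x) \to R/(x^p)$ differ from those defining $\delta$). Once this identity is secured, the induction on $x$-torsion order is routine bookkeeping.
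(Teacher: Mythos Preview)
Your argument is correct and follows essentially the same strategy as the paper: establish the commutation identity $x^{p-1}F\circ\delta=\delta\circ\bar F$ and then reduce to elements killed by $x$. Two remarks on execution. First, your derivation of the identity via the auxiliary sequence $0\to R\xrightarrow{x^p}R\to R/(x^p)\to 0$ is correct but unnecessarily elaborate: the paper obtains it in one stroke by observing that $(x^{p-1}F,\,F,\,\bar F)$ is a morphism from the short exact sequence $0\to R\xrightarrow{x}R\to R/(x)\to 0$ to itself (since $F(xr)=x^pr^p=x\cdot x^{p-1}F(r)$), and then invoking naturality of the connecting map. Second, where you induct on the $x$-torsion order of $\eta$, the paper instead notes that a nonzero kernel of $x^{p-1}F$ in the Artinian module $H^s_{\fm}(R)$ must contain a nonzero socle element, which is automatically killed by $x$; this reaches the base case without an explicit induction. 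Both reductions are valid and amount to the same thing.
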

\begin{proof}The natural commutative diagram
$$
\begin{CD}
0 @>>>  R  @>x>> R @>>> R/(x)  @>>> 0\\
@. @Vx^{p-1}FVV @VFVV @VFVV \\
0 @>>>  R  @>x>> R @>>> R/(x) @>>> 0
\end{CD}
$$
induces the following commutative diagram (the left most $0$ comes from our hypothesis that the map $H^{s-1}_{\fm}(R) \overset{x}{\to} H^{s-1}_{\fm}(R)$ is surjective):
$$
\begin{CD}
0 @>>>  H^{s-1}_{\fm}(R/(x))  @>\alpha>> H^s_{\fm}(R) @>x>> H^s_{\fm}(R)  @>>> \cdots\\
@. @VFVV @Vx^{p-1}FVV @VFVV\\
0 @>>>  H^{s-1}_{\fm}(R/(x))  @>\alpha>> H^s_{\fm}(R) @>x>> H^s_{\fm}(R)  @>>> \cdots.
\end{CD}
$$
Suppose $y \in \ker (x^{p-1}F) \cap \mathrm{Soc}(H^s_{\fm}(R))$. Then we have $x\cdot y = 0$ so there exists $z \in H^{s-1}_{\fm}(R/(x))$ such that $\alpha(z) = y$. Following the above commutative diagram we have
$$(\alpha \circ F) (z) = x^{p-1}F(\alpha (z)) = x^{p-1}F(y)=0.$$
However, since both $F$ and $\alpha$ are injective, we have $z = 0$ and hence $y=0$. This shows $x^{p-1}F$ is injective and hence completes the proof.
\end{proof}

Proposition \ref{crutical tech} immediately generalizes the main result of \cite{HMS14}:

\begin{corollary}[compare with \cite{HMS14}, Main Theorem] \label{sur imp injectivity}
Let $(R, \fm)$ be a local ring of prime characteristic $p$ and $x$ a regular element of $R$. Suppose $R/(x)$ is $F$-injective. Then we have
\begin{enumerate}[{(i)}]\rm
\item {\it The map $H^{t}_{\fm}(R) \overset{x^{p-1}F}{\longrightarrow} H^{t}_{\fm}(R)$ is injective where $t=\depth R$. In particular, the natural Frobenius action on $H^t_{\fm}(R)$ is injective.}
\item {\it Suppose $x$ is a surjective element. Then the map $H^{i}_{\fm}(R) \overset{x^{p-1}F}{\longrightarrow} H^{i}_{\fm}(R)$ is injective for all $i\ge 0$. In particular, $R$ is $F$-injective.}
\item {\it If $R/(x)$ is $F$-full (e.g., $R$ is $F$-anti-nilpotent or $R$ is $F$-pure), then $R$ is $F$-injective.}
\end{enumerate}
\end{corollary}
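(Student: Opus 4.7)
The plan is to reduce all three parts to Proposition \ref{crutical tech}, whose two hypotheses at index $s$ are: (a) the multiplication map $H^{s-1}_{\fm}(R)\xrightarrow{x} H^{s-1}_{\fm}(R)$ is surjective, and (b) the Frobenius action on $H^{s-1}_{\fm}(R/(x))$ is injective. Since $R/(x)$ is assumed $F$-injective throughout the corollary, hypothesis (b) holds for every $s$, so in each part the task reduces to arranging (a) at the relevant index. Once (a) holds, Proposition \ref{crutical tech} gives injectivity of $x^{p-1}F$ on $H^s_{\fm}(R)$, and this upgrades to injectivity of $F$ itself via the factorization $x^{p-1}F = (x^{p-1})\circ F$: if $F(y)=0$ then $x^{p-1}F(y)=0$, forcing $y=0$.

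For (i), I would take $s=t=\depth R$ and observe that the definition of depth forces $H^{t-1}_{\fm}(R)=0$, so hypothesis (a) is vacuous. For (ii), Proposition \ref{surj} translates ``$x$ is a surjective element'' into the statement that multiplication by $x$ is surjective on $H^i_{\fm}(R)$ for every $i$, so (a) holds simultaneously at every index and one obtains $F$-injectivity of $R$ in every degree. For (iii), the hypothesis that $R/(x)$ is $F$-full triggers Proposition \ref{full imp surj}, telling us that $x$ is a surjective element and thereby reducing (iii) to (ii). The parenthetical examples follow because $F$-pure implies $F$-anti-nilpotent (as recorded in the remark after the definition of $F$-full and $F$-anti-nilpotent), and $F$-anti-nilpotent implies $F$-full by Lemma \ref{anti-nilp implies full}; applying this chain to $R/(x)$ finishes.

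There is no substantive obstacle: the corollary is essentially a bookkeeping exercise assembling Propositions \ref{surj}, \ref{full imp surj}, and \ref{crutical tech}. The only point meriting a moment's attention is the factorization step that turns injectivity of the twisted map $x^{p-1}F$ into injectivity of the honest Frobenius $F$, which, as noted above, is immediate. It is worth emphasizing that the strengthening from the statement ``$F$ is injective on the top local cohomology'' in (i) to the full $F$-injectivity in (ii)--(iii) is bought entirely by upgrading the surjectivity input in hypothesis (a) from the vacuous case $H^{t-1}_{\fm}(R)=0$ to the stronger surjective-element condition, so no new structural idea is needed beyond Proposition \ref{crutical tech}.
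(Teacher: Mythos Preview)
Your proposal is correct and follows essentially the same approach as the paper: both reduce all three parts to Proposition~\ref{crutical tech}, invoking Proposition~\ref{surj} for (ii) and Proposition~\ref{full imp surj} for (iii). You supply a bit more detail than the paper (explaining explicitly why $H^{t-1}_{\fm}(R)=0$ in (i), why injectivity of $x^{p-1}F$ forces injectivity of $F$, and how the parenthetical examples feed in), but the underlying argument is identical.
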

\begin{proof}
(i) follows from Proposition \ref{crutical tech} applied to $s=t$. (ii) also follows from Proposition \ref{crutical tech} (because $H^{i}_{\fm}(R) \overset{x}{\to} H^{i}_{\fm}(R)$ is surjective for every $i\geq 0$ by Proposition \ref{surj}). (iii) follows from (ii), because we know $x$ is a surjective element by Proposition \ref{full imp surj}.
\end{proof}

In the next two sections, we will show that $F$-full and $F$-anti-nilpotent singularities both deform. We will also prove new cases of deformation of $F$-injectivity. These results are generalizations of Proposition \ref{crutical tech} and Corollary \ref{sur imp injectivity}.

\section{Deformation of $F$-full and $F$-anti-nilpotent singularities}
In this section we prove that the condition $F$-full and $F$-anti-nilpotent both deform. Throughout this section we assume that $(R, \fm)$ is a local ring of prime characteristic $p$. We begin with a crucial lemma.

\begin{lemma}\label{crucial lem}
Let $x$ be a surjective element of $R$. Let $N\subseteq H_{\fm}^i(R)$ be an $F$-stable submodule. Let $L=\cap_t x^tN$. Then $L$ is an $F$-stable submodule of $H_{\fm}^i(R)$ and we have the following commutative diagram (for every $e\geq 1$):
$$
\begin{CD}
0 @>>>  H^{i-1}_{\fm}(R/(x))/\phi^{-1}(L)  @>\phi>> H^i_{\fm}(R)/L @>x>> H^i_{\fm}(R)/L @>>> 0\\
@. @VF^eVV @Vx^{p^e-1}F^eVV @VF^eVV \\
0 @>>>  H^{i-1}_{\fm}(R/(x))/\phi^{-1}(L)  @>\phi>> H^i_{\fm}(R)/L @>x>> H^i_{\fm}(R)/L @>>> 0,
\end{CD}
$$
where $\phi$ is the map $H^{i-1}_{\fm}(R/(x))\to H^{i}_{\fm}(R)$.
\end{lemma}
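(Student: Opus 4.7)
The plan is to build the diagram from the short exact sequence attached to $x$ being a surjective element, pass to the quotient by $L$, and then read off Frobenius compatibility from the $e$-th iterate of the commutative diagram used in the proof of Proposition~\ref{crutical tech}.

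First I would verify that $L$ is $F$-stable and that all three vertical arrows descend to the claimed quotients. Using $F(rm)=r^pF(m)$ together with $F(N)\subseteq N$, one gets $F^e(x^tN)\subseteq x^{tp^e}N$, hence $F^e(L)\subseteq \bigcap_t x^{tp^e}N=L$; combined with $xL\subseteq L$ this shows that $x^{p^e-1}F^e$ sends $L$ to $L$. Applying local cohomology to the $e$-th iterate of the commutative diagram appearing in the proof of Proposition~\ref{crutical tech} also yields the identity $\phi\circ F^e = x^{p^e-1}F^e\circ \phi$, which together with the previous observations gives $F^e(\phi^{-1}(L))\subseteq \phi^{-1}(L)$.

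Next I would produce the rows. Since $x$ is a surjective element, Proposition~\ref{surj} collapses the long exact sequence attached to $0\to R\xrightarrow{x}R\to R/(x)\to 0$ into
\[
0\to H^{i-1}_{\fm}(R/(x))\xrightarrow{\phi} H^i_{\fm}(R)\xrightarrow{x} H^i_{\fm}(R)\to 0.
\]
After quotienting the middle and right terms by $L$, injectivity of the induced $\bar{\phi}$ is automatic from the definition of $\phi^{-1}(L)$, and surjectivity of $\bar{x}$ is inherited from the surjectivity of $x$ on $H^i_{\fm}(R)$. For exactness in the middle I would apply the snake lemma to $0\to L\to H^i_{\fm}(R)\to H^i_{\fm}(R)/L\to 0$ with multiplication by $x$; using $\coker(x)=0$ on $H^i_{\fm}(R)$, the snake sequence yields the identification $\ker(\bar{x})/\im(\bar{\phi})\cong L/xL$. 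Thus exactness of the row reduces to the single identity $xL=L$. This is the main obstacle, and I would derive it from stabilization of the descending chain $N\supseteq xN\supseteq x^2N\supseteq \cdots$ in the settings where the lemma is applied (for instance whenever $H^i_{\fm}(R)$ is Artinian), so that $L=x^sN$ and $xL=x^{s+1}N=L$ for all sufficiently large $s$.

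Finally, commutativity of the two squares is formal. The right square commutes because $x\cdot x^{p^e-1}F^e(y)=x^{p^e}F^e(y)=F^e(xy)$ on $H^i_{\fm}(R)$, and the left square commutes because $\phi\circ F^e = x^{p^e-1}F^e\circ \phi$ as already noted. Both identities descend without further work to the quotients by $L$ and $\phi^{-1}(L)$, completing the diagram.
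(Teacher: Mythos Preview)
Your argument is correct and follows essentially the same route as the paper: start from the short exact sequence $0\to H^{i-1}_{\fm}(R/(x))\xrightarrow{\phi}H^i_{\fm}(R)\xrightarrow{x}H^i_{\fm}(R)\to 0$ (via Proposition~\ref{surj}), verify $F$-stability of $L$ and hence of $\phi^{-1}(L)$, and reduce exactness of the quotient row to the single identity $xL=L$. The paper checks middle exactness by hand---if $xy\in L=xL$ then $xy=xz$ with $z\in L$, so $y-z\in\im(\phi)$---whereas you reach the same reduction via the snake lemma; both are fine. Your caution about $xL=L$ is well placed: the paper writes ``$L=xL$ by the construction of $L$'' without further comment, but this really uses stabilization of $N\supseteq xN\supseteq\cdots$ in the Artinian module $H^i_{\fm}(R)$, a point the paper only makes explicit later in the proof of Theorem~\ref{stably}.
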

\begin{proof}
Since $x$ is a surjective element, by Proposition \ref{surj} we know that the map $$H^i_{\fm}(R) \overset{x}{\to} H^i_{\fm}(R) \text{ is surjective for every } i>0. \quad (\star)$$ Applying the local cohomology functor to the following commutative diagram:
$$
\begin{CD}
0 @>>>  R  @>x>> R @>>> R/(x)  @>>> 0\\
@. @Vx^{p^e-1}F^eVV @VF^eVV @VF^eVV \\
0 @>>>  R  @>x>> R @>>> R/(x) @>>> 0,
\end{CD}
$$
we have the following commutative diagram:
$$
\begin{CD}
0 @>>>  H^{i-1}_{\fm}(R/(x)) @>\phi>> H^i_{\fm}(R) @>x>> H^i_{\fm}(R) @>>> 0\\
@. @VF^eVV @Vx^{p^e-1}F^eVV @VF^eVV \\
0 @>>>  H^{i-1}_{\fm}(R/(x))  @>\phi>> H^i_{\fm}(R)@>x>> H^i_{\fm}(R) @>>> 0
\end{CD}
$$
for all $i \ge 1$ and $e\geq 1$, where the rows are short exact sequences by $(\star)$.

Therefore to prove the lemma, it suffices to show that $L$ is $F$-stable and
$$0 \to H^{i-1}_{\fm}(R/(x))/\phi^{-1}(L) \overset{\phi}{\to} H^i_{\fm}(R)/L \overset{x}{\to} H^i_{\fm}(R)/L \to 0$$ is exact. It is clear that $L$ is $F$-stable since it is an intersection of $F$-stable submodules of $H_{\fm}^i(R)$. To see the exactness of the above sequence, first note that $\im(\phi)=0:_{H_{\fm}(R)}x$, so $L+\im(\phi)\subseteq L:_{H_{\fm}(R)}x$. Thus it is enough to check that $L:_{H_{\fm}(R)}x \subseteq L + \im (\phi)$. Let $y$ be an element such that $xy \in L$. Since $L = xL$ by the construction of $L$, there exists $z \in L$ such that $xy = xz$. So $y-z \in \im(\phi)$ and hence $y \in L + \im (\phi)$, as desired.
\end{proof}


We are ready to prove the main result of this section. This answers \cite[Problem 4]{QS16} for stably FH-finiteness.
\begin{theorem}\label{stably}
$(R, \fm)$ be a local ring of positive characteristic $p$ and $x$ a regular element of $R$. Then we have:
\begin{enumerate}[{(i)}]\rm
\item If $R/(x)$ is $F$-anti-nilpotent, then so is $R$.
\item If $R/(x)$ is $F$-full, then so is $R$.
\end{enumerate}
\end{theorem}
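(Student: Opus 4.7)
Both parts share the same initial setup. By Lemma~\ref{anti-nilp implies full}, $R/(x)$ being $F$-anti-nilpotent implies it is $F$-full, so in either case Proposition~\ref{full imp surj} gives that $x$ is a surjective element of $R$; in particular, $H^j_\fm(R) \xrightarrow{x} H^j_\fm(R)$ is surjective for every $j$. This puts us in the scope of Lemma~\ref{crucial lem}: for any $F$-stable submodule $N \subseteq H^i_\fm(R)$, setting $L := \bigcap_t x^t N$, we obtain the short exact sequence
\[
0 \to H^{i-1}_\fm(R/(x))/\phi^{-1}(L) \xrightarrow{\phi} H^i_\fm(R)/L \xrightarrow{x} H^i_\fm(R)/L \to 0,
\]
together with the compatible Frobenius diagram whose outer columns carry $F^e$ and whose middle column carries $x^{p^e-1}F^e$, for any $e \ge 1$. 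Moreover, under $x$-surjectivity the long exact sequence identifies $\phi$ as an injection $H^{i-1}_\fm(R/(x)) \hookrightarrow H^i_\fm(R)$ with image $K := 0 :_{H^i_\fm(R)} x$.

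For part (ii), I would take $N$ to be the $R$-span of $F(H^i_\fm(R))$, so that the goal becomes $N = H^i_\fm(R)$. The crux is the claim $K \subseteq L$. Writing any $u \in K$ as $\phi(\bar w)$ with $\bar w \in H^{i-1}_\fm(R/(x))$, the $F^e$-fullness of $R/(x)$ at level $i-1$ produces a decomposition $\bar w = \sum_k \bar r_k\, F^e(\bar w_k)$; applying $\phi$ and using the equivariance $\phi \circ F^e = x^{p^e-1}F^e \circ \phi$ then yields
\[
u = \sum_k r_k\, x^{p^e-1} F^e(\phi(\bar w_k)) \in x^{p^e-1}\, N.
\]
Since this holds for every $e \ge 1$, $u \in \bigcap_e x^{p^e-1}N \subseteq L$, establishing $K \subseteq L$. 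Consequently the leftmost term of the displayed short exact sequence vanishes, so $x$ is both injective and surjective on $H^i_\fm(R)/L$. Since $H^i_\fm(R)/L$ is Artinian and $x \in \fm$, the standard socle argument (any nonzero Artinian module has nonzero socle, which $x$ would annihilate) forces $H^i_\fm(R)/L = 0$, whence $L = N = H^i_\fm(R)$.

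For part (i), let $N \subseteq H^i_\fm(R)$ be an arbitrary $F$-stable submodule and again set $L = \bigcap_t x^t N$. The submodule $\phi^{-1}(L)$ is $F$-stable in $H^{i-1}_\fm(R/(x))$, so the quotient $H^{i-1}_\fm(R/(x))/\phi^{-1}(L)$ inherits $F$-anti-nilpotency (quotients of anti-nilpotent modules by $F$-stable submodules are anti-nilpotent). The diagram chase of Proposition~\ref{crutical tech} applied to the commutative diagram of Lemma~\ref{crucial lem} then yields $x^{p-1}F$, and hence $F$, injective on $H^i_\fm(R)/L$. To deduce $F$-anti-nilpotency of $R$ at level $i$ one needs $F$ injective on $H^i_\fm(R)/N$ itself (which by Lemma~\ref{anti-nilp implies full} is equivalent). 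My plan is to strengthen the diagram chase so as to show that $H^i_\fm(R)/L$ is in fact $F$-anti-nilpotent; since $N/L$ is $F$-stable in $H^i_\fm(R)/L$, the induced Frobenius on the further quotient $(H^i_\fm(R)/L)/(N/L) = H^i_\fm(R)/N$ is then automatically injective.

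The main obstacle will be precisely this promotion from $F$-injectivity to full $F$-anti-nilpotency of $H^i_\fm(R)/L$. The short exact sequence of Lemma~\ref{crucial lem} is not a sequence of $R\{F\}$-modules in the usual sense, since its middle and right terms are the same module with different Frobenius actions ($x^{p-1}F$ versus $F$); standard ``extension of anti-nilpotent is anti-nilpotent'' results therefore cannot be invoked off the shelf. The commutativity $F \circ x = x \circ x^{p-1}F$ built into the diagram, together with the freedom to run the argument at every Frobenius power $F^e$, should provide enough flexibility to transport the anti-nilpotency structure from $H^{i-1}_\fm(R/(x))/\phi^{-1}(L)$ across to $H^i_\fm(R)/L$, by re-running the Proposition~\ref{crutical tech} chase for each $F$-stable submodule of $H^i_\fm(R)/L$ lifted to an $F$-stable submodule containing $L$.
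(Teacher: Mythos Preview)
Your argument for part~(ii) is correct and in fact slightly more direct than the paper's. The paper argues that the middle map $x^{p^e-1}F^e$ on $H^i_\fm(R)/L$ vanishes for $e\gg 0$ (using the Artinian stabilization $L=x^nN$), deduces that the induced Frobenius on $H^{i-1}_\fm(R/(x))/\phi^{-1}(L)$ is nilpotent, and then uses $F$-fullness of $R/(x)$ to conclude $\phi^{-1}(L)=H^{i-1}_\fm(R/(x))$. You bypass this detour by showing $K=\im(\phi)\subseteq L$ directly from the equivariance $\phi\circ F^e=x^{p^e-1}F^e\circ\phi$ and fullness of $H^{i-1}_\fm(R/(x))$; this immediately kills the leftmost term. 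Both routes end at the same place (multiplication by $x$ is an automorphism of $H^i_\fm(R)/L$), and both conclude identically.

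Your treatment of part~(i), however, has a genuine gap. You correctly obtain injectivity of $x^{p^e-1}F^e$ (hence of $F$) on $H^i_\fm(R)/L$, but you need injectivity on $H^i_\fm(R)/N$. Your proposed fix---promote $H^i_\fm(R)/L$ to full anti-nilpotency by re-running the chase for each $F$-stable submodule lifted back to $H^i_\fm(R)$---is circular: applying the construction to a lift $\tilde M\supseteq L$ produces a new $L_{\tilde M}=\bigcap_t x^t\tilde M\subseteq \tilde M$ and injectivity only on $H^i_\fm(R)/L_{\tilde M}$, which is the same problem one level down, with no termination mechanism.

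The missing observation is that $L=N$ outright. Since $H^i_\fm(R)$ is Artinian, the chain $N\supseteq xN\supseteq x^2N\supseteq\cdots$ stabilizes, so $L=x^nN$ for all $n\gg 0$. Choose $e$ with $p^e-1\ge n$; then $x^{p^e-1}F^e(N)\subseteq x^{p^e-1}N\subseteq x^nN=L$, i.e.\ $x^{p^e-1}F^e$ sends $N/L$ to zero in $H^i_\fm(R)/L$. But you have already shown $x^{p^e-1}F^e$ is injective on $H^i_\fm(R)/L$, so $N/L=0$ and $N=L$. Now the injectivity of $F$ on $H^i_\fm(R)/L=H^i_\fm(R)/N$ is exactly what was wanted. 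This single line replaces your entire ``promotion'' strategy.
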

\begin{proof}
We first prove (i). Let $N$ be an $F$-stable submodule of $H^i_{\fm}(R)$. We want to show that the induced Frobenius action on $H^i_{\fm}(R)/N$ is injective. Since $R/(x)$ is $F$-anti-nilpotent, $x$ is a surjective element by Proposition \ref{full imp surj}. Let $L = \cap_t x^tN$. By Lemma \ref{crucial lem}, we have the following commutative diagram:
$$
\begin{CD}
0 @>>>  H^{i-1}_{\fm}(R/(x))/\phi^{-1}(L)  @>\phi>> H^i_{\fm}(R)/L @>x>> H^i_{\fm}(R)/L @>>> 0\\
@. @VF^eVV @Vx^{p^e-1}F^eVV @VF^eVV \\
0 @>>>  H^{i-1}_{\fm}(R/(x))/\phi^{-1}(L)  @>\phi>> H^i_{\fm}(R)/L @>x>> H^i_{\fm}(R)/L @>>> 0.
\end{CD}
$$

We first claim that the middle map $x^{p^e-1}F^e: H^i_{\fm}(R)/L \to H^i_{\fm}(R)/L$ is injective. Let $y \in \ker(x^{p^e-1}F^e) \cap \mathrm{Soc}(H^i_{\fm}(R)/L)$. We have $x\cdot y = 0$, so $y = \phi(z)$ for some $z \in H^{i-1}_{\fm}(R/(x))/\phi^{-1}(L)$. It is easy to see that $\phi^{-1}(L)$ is an $F$-stable submodule of $H^{i-1}_{\fm}(R/(x))$ and $F^e(z)=0$. Since $R/(x)$ is $F$-anti-nilpotent, we know the Frobenius action $F$, and hence its iterate $F^e$, on $H^{i-1}_{\fm}(R/(x))/\phi^{-1}(L)$ is injective. Therefore, $z=0$ and hence $y=0$. This proves that $x^{p^e-1}F^e$ and hence $F$ acts injectively on $H^i_{\fm}(R)/L$.

Note that we have a descending chain $ N\supseteq xN\supseteq x^2N\supseteq\cdots.$ Since $H_{\fm}^i(R)$ is Artinian, $L = \cap_tx^tN=x^nN$ for all $n\gg 0$. We next claim that $L =N$, this will finish the proof because we already showed $F$ acts injectively on $H^i_{\fm}(R)/L$. We have $x^{p^e-1}F^e(N)\subseteq x^{p^e-1}N=L$ for $e\gg 0$, but the map $x^{p^e-1}F^e: H^i_{\fm}(R)/L \to H^i_{\fm}(R)/L$ is injective by the above paragraph. So we must have $N\subseteq L$ and thus $L = N$. This completes the proof of (1).

Next we prove (ii). The method is similar to that of (i). Let $N$ be the $R$-span of $F(H_{\fm}^i(R))$ in $H_{\fm}^i(R)$, this is  the same as the image of $\mathscr{F}_R(H_{\fm}^i(R))\to H_{\fm}^i(R)$. It is clear that $N$ is an $F$-stable submodule. We want to show $N=H_{\fm}^i(R)$. Since $R/(x)$ is $F$-full, $x$ is a surjective element by Proposition \ref{full imp surj}. Let $L = \cap_t x^tN$. By Lemma \ref{crucial lem}, we have the following commutative diagram:
$$
\begin{CD}
0 @>>>  H^{i-1}_{\fm}(R/(x))/\phi^{-1}(L)  @>\phi>> H^i_{\fm}(R)/L @>x>> H^i_{\fm}(R)/L @>>> 0\\
@. @VF^eVV @Vx^{p^e-1}F^eVV @VF^eVV \\
0 @>>>  H^{i-1}_{\fm}(R/(x))/\phi^{-1}(L)  @>\phi>> H^i_{\fm}(R)/L @>x>> H^i_{\fm}(R)/L @>>> 0.
\end{CD}
$$

The descending chain $N\supseteq xN\supseteq x^2N\supseteq\cdots$ stabilizes because $H_{\fm}^i(R)$ is Artinian. So $L=\cap_t x^tN=x^nN$ for $n\gg 0$. The key point is that in the above diagram, the middle Frobenius action  $x^{p^e-1}F^{e}$ is the zero map on $H_{\fm}^i(R)/L$ for $e\gg 0 $, because for any $y\in H_{\fm}^i(R)$, $F^e(y)\in N$ and thus $x^{p^e-1}F^{e}(y)\in L$ for $e\gg 0$. But then since $H_{\fm}^{i-1}(R/(x))/{\phi^{-1}(L)}$ can be viewed as a submodule of $H_{\fm}^i(R)/L$ by the above commutative diagram, the natural Frobenius action $F^e$ on $H_{\fm}^{i-1}(R/(x))/{\phi^{-1}(L)}$ is zero, i.e., $F$ is nilpotent on $H_{\fm}^{i-1}(R/(x))/{\phi^{-1}(L)}$.

Since $F$ is nilpotent on $H_{\fm}^{i-1}(R/(x))/{\phi^{-1}(L)}$, we know that $\phi^{-1}(L)$ must contain all elements $F^e(H_{\fm}^i(R/(x)))$, hence it contains the $R$-span of $F^e(H_{\fm}^i(R/(x)))$. But $R/(x)$ is $F$-full, so we must have $\phi^{-1}(L)=H_{\fm}^{i-1}(R/(x))$. But this means the map $$H_{\fm}^i(R)/L\xrightarrow{x}H_{\fm}^i(R)/L$$ is an isomorphism, which is impossible unless $H_{\fm}^i(R)=L$ (since otherwise any nonzero socle element of $H_{\fm}^i(R)/L$ maps to zero). Therefore we have $H_{\fm}^i(R)=N=L$. This proves $R$ is $F$-full and hence finished the proof of (2).
\end{proof}

The following is a well-known counter-example of Fedder \cite{F83} and Singh \cite{Sin99a} for the deformation of $F$-purity.
\begin{example}[compare with \cite{QS16}, Lemma 6.1] Let $K$ be a perfect field of characteristic $p>0$ and let
$$
R:=K[[U,V,Y,Z]]/(UV,UZ,Z(V-Y^2)).
$$
Let $u,v,y$ and $z$ denote the image of $U, V, Y$ and $Z$ in $R$ (and its quotients), respectively. Then $y$ is a regular element of $R$ and $R/(y) \cong K[[U, V, Z]]/(UV, UZ, VZ)$ is $F$-pure by \cite[Proposition 5.38]{HR76}. So $R/(y)$ is $F$-anti-nilpotent  by \cite[Theorem 1.1]{M14}. By Theorem \ref{stably} we have $R$ is also $F$-anti-nilpotent, or equivalently, $R$ is stably $FH$-finite.
\end{example}


\section{$F$-injectivity}
\subsection{$F$-injectivity and depth}
We start with the following definition.
\begin{definition}(cf. \cite[Definition 9.1.3]{BS98})
\label{nCM3}
Let $M$ be a finitely generated module over a local ring $(R,\fm)$. The \textit{finiteness dimension $f_{\fm}(M)$ of $M$ with respect to} $\fm$ is defined as follows:
$$
f_{\fm}(M):=\mathrm{inf} \{i \,|\, H^i_{\fm}(M)~\text{is not finitely generated} \} \in \mathbb{Z}_{\ge 0} \cup \{\infty\}.
$$
\end{definition}

\begin{remark}\label{finite dim}
\begin{enumerate}[{(i)}]
\item Assume that $\dim M=0$ or $M=0$ (recall that a trivial module has dimension $-1$). In this case, $H^i_{\fm}(M)$ is finitely generated for all $i$ and $f_{\fm}(M)$ is equal to $\infty$. It will be essential to know when the finiteness dimension is a positive integer. We mention the following result. Let $(R,\fm)$ be a local ring and let $M$ be a finitely generated $R$-module. If $d=\dim M >0$, then the local cohomology module $H^d_{\fm}(M)$ is not finitely generated. For the proof of this result, see \cite[Corollary 7.3.3]{BS98}.
\item Suppose $(R, \fm)$ is an image of a Cohen-Macaulay local ring. By the Grothendieck finiteness theorem (cf. \cite[Theorem 9.5.2]{BS98}) we have
$$f_{\fm}(M) = \min \{\depth M_{\fp} + \dim R/\fp \, : \, \fp \in \mathrm{Supp}(M) \setminus \{\fm \} \}.$$
\item $M$ is {\it generalized Cohen-Macaulay} if and only if $\dim M = f_{\fm}(M)$.
\end{enumerate}
\end{remark}

It is clear that $\depth R \le f_{\fm}(R) \le \dim R$. The following result says that if $R/(x)$ is $F$-injective, then $R$ has ``good" depth.
\begin{theorem} \label{googdepth}
If $R/(x)$ is $F$-injective, then $\depth R = f_{\fm}(R)$.
\end{theorem}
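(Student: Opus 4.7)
The plan is to establish the nontrivial inequality $f_{\fm}(R) \leq \depth R$; the opposite inequality $\depth R \leq f_{\fm}(R)$ is automatic from the fact that $H_{\fm}^i(R)=0$ for $i<\depth R$. Setting $t = \depth R$, it suffices to show that $H_{\fm}^t(R)$ fails to be finitely generated. Note that $\dim R \geq 1$ because $x \in \fm$ is a nonzerodivisor, so $t$ is a finite nonnegative integer and $H_{\fm}^t(R) \neq 0$ by the definition of depth.

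The hypothesis that $R/(x)$ is $F$-injective, together with Corollary \ref{sur imp injectivity}(i) applied at $s = t$, gives that the twisted Frobenius
$$x^{p-1}F:\ H_{\fm}^t(R)\longrightarrow H_{\fm}^t(R)$$
is injective. I then argue by contradiction: suppose $H_{\fm}^t(R)$ were finitely generated. Because it is $\fm$-torsion, it would have finite length as an $R$-module, and therefore any injective additive endomorphism of it must be bijective. In particular, $x^{p-1}F$ would be surjective.

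Here is the payoff. Although $x^{p-1}F$ is only $p$-linear rather than $R$-linear, its image is visibly contained in the honest $R$-submodule $x^{p-1}\cdot H_{\fm}^t(R)$, since $x^{p-1}F(y)=x^{p-1}\cdot F(y)$ for every $y$. Surjectivity would thus force $x^{p-1}\cdot H_{\fm}^t(R) = H_{\fm}^t(R)$, and Nakayama's lemma (applicable since $x^{p-1}\in \fm$ and $H_{\fm}^t(R)$ is finitely generated) would yield $H_{\fm}^t(R)=0$, contradicting $t=\depth R$. Hence $H_{\fm}^t(R)$ is not finitely generated, giving $f_{\fm}(R)\leq t=\depth R$ as required. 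The only subtlety to keep an eye on is the step where a non-$R$-linear map is fed into Nakayama's lemma; the key observation that its image nonetheless lies inside $x^{p-1}\cdot H_{\fm}^t(R)$ is what bridges the Frobenius side with the commutative-algebra side, and once this is in place the rest of the argument is routine.
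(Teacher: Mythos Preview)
Your overall strategy is the same as the paper's, and most of the proof is fine, but there is a genuine gap at the sentence ``any injective additive endomorphism of it must be bijective.'' This assertion is false in general for modules of finite \emph{$R$-length}: finite length over $R$ does not make the underlying abelian group (or $\mathbb{F}_p$-vector space) Artinian. For a concrete failure, take $R=k$ with $k=\mathbb{F}_p(t)$ and $M=k$; the Frobenius $a\mapsto a^p$ is an injective additive self-map of the length-one $R$-module $M$, yet its image is $k^p\subsetneq k$. Since $x^{p-1}F$ is only $p$-linear, you cannot invoke the standard ``injective $R$-linear endomorphism of an Artinian $R$-module is surjective'' argument either: the images $\im((x^{p-1}F)^e)$ are not $R$-submodules, so the descending chain condition on $R$-submodules does not directly apply to them.

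The fix is very close to what you wrote and is exactly what the paper does. Iterate: the $e$-th power of $x^{p-1}F$ is $x^{p^e-1}F^e$, which is still injective for every $e$. If $H^t_{\fm}(R)$ had finite length it would be annihilated by some power $x^N$, hence $x^{p^e-1}F^e=0$ once $p^e-1\geq N$; an injective map that is identically zero forces $H^t_{\fm}(R)=0$, contradicting $t=\depth R$. This bypasses any appeal to surjectivity or Nakayama. So your plan is correct, and only the single bijectivity step needs to be replaced by this iteration argument.
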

\begin{proof} Suppose $t = \depth R < f_{\fm}(R)$. The commutative diagram
$$
\begin{CD}
0 @>>>  R  @>x>> R @>>> R/(x)  @>>> 0\\
@. @Vx^{p-1}FVV @VFVV @VFVV \\
0 @>>>  R  @>x>> R @>>> R/(x) @>>> 0
\end{CD}
$$
induces the following commutative diagram
$$
\begin{CD}
0 @>>>  H^{t-1}_{\fm}(R/(x))  @>\alpha>> H^t_{\fm}(R)  @>>> \cdots\\
@. @VF^eVV @V x^{p^e-1}F^eVV \\
0 @>>>  H^{t-1}_{\fm}(R/(x))  @>\alpha>> H^t_{\fm}(R)  @>>> \cdots,
\end{CD}
$$
where both $\alpha$ and the left vertical map are injective. But $H^t_{\fm}(R)$ has finite length, $x^{p^e-1}F^e: H^t_{\fm}(R) \to H^t_{\fm}(R)$ vanishes for $e \gg 0$, which is a contradiction.
\end{proof}

\begin{remark}
The assertion of Theorem \ref{googdepth} also holds true if $R/(x)$ is $F$-full. Indeed, by Proposition \ref{full imp surj} we have $x$ is a surjective element. Hence there is no nonzero $H^i_{\fm}(R)$ of finite length. Thus $\depth R = f_{\fm}(R)$.
\end{remark}

\begin{remark} The above result is closely related to the work of Schwede and Singh in \cite[Appendix]{HMS14}. In the proof of \cite[Lemma A.2, Theorem A.3]{HMS14}, it is claimed that if $R_{\fp}$ satisfies the Serre condition $(S_k)$ for all $\fp$ in $\mathrm{Spec}^{\circ}(R)$, the punctured spectrum of $R$, and $\depth R = t < k$, then $H^t_{\fm}(R)$ is finitely generated. But this fact may not be true if $R$ is not equidimensional. For instance, let $R = K[[a,b,c,d]]/(a) \cap (b,c,d)$ with $K$ a field. We have $\depth R = 1$ and $R_{\fp}$ satisfies $(S_2)$ for all $\fp \in \mathrm{Spec}^{\circ}(R)$. However, $H^1_{\fm}(R)$ is not finitely generated.
\end{remark}

The assertion of \cite[Lemma A.2]{HMS14} (and hence \cite[Theorem A.3]{HMS14}) is still true. In fact, we can reduce it to the case that $R$ is equidimensional. We fill this gap below.

\begin{corollary}[\cite{HMS14}, Lemma A.2] Let $(R, \fm)$ be an $F$-finite local ring. Suppose there exists a regular element $x$ such that $R/(x)$ is $F$-injective. If $R_{\fp}$ satisfies the Serre condition $(S_k)$ for all $\fp \in \mathrm{Spec}^{\circ}(R)$, then $R$ is $(S_k)$.
\end{corollary}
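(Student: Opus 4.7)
The plan is to first prove the statement when $R$ is equidimensional via Grothendieck's finiteness theorem and Theorem \ref{googdepth}, then reduce the general case to this.

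For the equidimensional case, assume $R$ is equidimensional of dimension $d$. Since $R$ is $F$-finite, it is excellent (hence catenary) and a homomorphic image of a regular local ring by Gabber's result, so Remark \ref{finite dim}(ii) gives
\[
f_\fm(R) = \min\bigl\{\depth R_\fp + \dim R/\fp : \fp \in \Supp R \setminus \{\fm\}\bigr\}.
\]
For each $\fp \neq \fm$ in $\Supp R$, the hypothesis gives $R_\fp$ is $(S_k)$. If $\dim R_\fp \geq k$, then $\depth R_\fp + \dim R/\fp \geq k+1$; if $\dim R_\fp < k$, then $R_\fp$ is Cohen--Macaulay and by equidimensionality plus catenarity, $\depth R_\fp + \dim R/\fp = \dim R_\fp + \dim R/\fp = d$. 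Either way the sum is at least $\min(k, d)$, so $f_\fm(R) \geq \min(k, d)$. Combining with $\depth R = f_\fm(R)$ from Theorem \ref{googdepth} (applicable since $R/(x)$ is $F$-injective) yields $\depth R \geq \min(k, d)$, so $R$ is $(S_k)$ at $\fm$, and the corollary follows.

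For the reduction to the equidimensional case, $R/(x)$ is $F$-injective, hence reduced, so $R$ is reduced as well. I would argue the combined hypotheses force $R$ to be equidimensional. For $k = 1$ the statement is trivial, so assume $k \geq 2$. Since a reduced local ring that is a homomorphic image of a Cohen--Macaulay ring and satisfies $(S_2)$ is necessarily equidimensional, the hypothesis on the punctured spectrum implies each $R_\fp$ with $\fp \neq \fm$ is equidimensional. Consequently, any two minimal primes $\fq, \fq'$ of $R$ with $\dim R/\fq \neq \dim R/\fq'$ cannot share a common containing prime other than $\fm$, i.e., $V(\fq) \cap V(\fq') \subseteq \{\fm\}$. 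I would then use the $F$-injectivity (equivalently, reducedness) of $R/(x)$ to exclude this ``wedge'' configuration: given a non-top-dimensional minimal prime $\fq$, the element $a \in \Ann_R(\fq) \setminus \fq$ --- nonzero by reducedness since $\fq \cdot \bigcap_{\fq' \neq \fq} \fq' \subseteq \bigcap_i \fq_i = 0$ --- should become a nonzero nilpotent in $R/(x)$, contradicting reducedness.

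The main obstacle will be carrying out the nilpotent argument in the reduction step: showing that $a^n \in (x)$ for some $n$, or equivalently that $\bar a \in \sqrt{(\bar x)}$ in the domain $R/\fq$. This is immediate when $\dim R/\fq = 1$, since there $\sqrt{(\bar x)}$ is the maximal ideal of $R/\fq$, which automatically contains $\bar a$. For $\dim R/\fq \geq 2$, a more delicate analysis is required, likely exploiting the ``meets only at $\fm$'' structure derived from the $(S_k)$ hypothesis together with the specific structure of $x$ to force $\bar a \in \sqrt{(\bar x)}$ in $R/\fq$.
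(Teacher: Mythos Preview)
Your treatment of the equidimensional case is correct and essentially identical to the paper's: both use Theorem \ref{googdepth} together with the Grothendieck finiteness formula in Remark \ref{finite dim}(ii) to bound $\depth R$ from below.

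The gap is in your reduction step. You correctly observe that a universally catenary local ring satisfying $(S_2)$ is equidimensional, but then you apply this fact only to the localizations $R_{\fp}$ and try to bootstrap equidimensionality of $R$ itself via a ``wedge'' argument and a nilpotency computation in $R/(x)$. As you note, this computation stalls when $\dim R/\fq \ge 2$, and there is no evident way to complete it: nothing in the hypotheses forces $\bar a \in \sqrt{(\bar x)}$ inside the domain $R/\fq$ in that range.

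The fix is to apply the equidimensionality criterion directly to $R$, which the paper does. Since $R/(x)$ is $F$-injective it is reduced, hence $\depth R/(x) \ge 1$, so $\depth R \ge 2$. Combined with the hypothesis that $R_{\fp}$ is $(S_k)$ (hence $(S_2)$, as you are already assuming $k \ge 2$) on the punctured spectrum, this gives that $R$ itself is $(S_2)$. Since $R$ is $F$-finite it is universally catenary, and now the very fact you cited yields that $R$ is equidimensional. Your own equidimensional argument then finishes the proof. In other words, the detour through minimal-prime configurations and nilpotents is unnecessary; the missing ingredient was simply the one-line observation $\depth R \ge 2$.
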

\begin{proof} We can assume that $k \le d = \dim R$. In fact, we need only to prove that $t: = \depth R \ge k$. The case $k = 1$ is trivial since $R$ contains a regular element $x$. For $k \ge 2$, since $R/(x)$ is $F$-injective we have $R/(x)$ is reduced (cf. \cite[Proposition 4.3]{Sch09}). Hence $\depth (R/(x)) \ge 1$, so $\depth R \ge 2$. Thus $R$ satisfies the Serre condition $(S_2)$. On the other hand, since $R$ is $F$-finite, $R$ is a homomorphic image of a regular ring by a result of Gabber \cite[Remark 13.6]{G04}. In particular, $R$ is universally catenary.\footnote{Another way to see this is to use the fact that $F$-finite rings are excellent \cite{K76} and hence universally catenary.} But if a universally catenary ring satisfies $(S_2)$, then it is equidimensional (see \cite[Remark 2.2 (h)]{HH94}). By Theorem \ref{googdepth} and Remark \ref{finite dim} (ii), there exists a prime ideal $\fp \in \mathrm{Spec}^{\circ}(R)$ such that $\depth R = \depth R_{\fp}  + \dim R/\fp$. It is then easy to see that $\depth R \ge \min \{d, k+1\} \ge k$. The proof is complete.
\end{proof}

\begin{remark} In the above argument, we actually proved that if $k<d$, then $\depth R \ge k+1$.
\end{remark}

\subsection{Deformation of $F$-injectivity} We begin with the following generalization of the notion of surjective elements.
\begin{definition}[cf. \cite{CMN04}]
A regular element $x$ is called a {\it strictly filter regular} element if
  $$\coker (H^i_{\fm}(R) \overset{x}{\to} H^i_{\fm}(R))$$
has finite length for all $i \ge 0$.
\end{definition}

\begin{lemma}\label{injec quotient} Let $(R,\fm)$ be a local ring of characteristic $p>0$. Suppose the residue field $k=R/\fm$ is perfect. Let $M$ be an $R$-module with an injective Frobenius action $F$. Suppose $L$ is an $F$-stable submodule of $M$ of finite length. Then the induced Frobenius action on $M/L$ is injective.
\end{lemma}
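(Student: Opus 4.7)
The plan is to reduce the lemma to proving that the restriction $F|_L\colon L \to L$ is actually bijective (as an additive map). Once this is established, the lemma is immediate: if $\bar y\in M/L$ has $\bar F(\bar y)=0$, then $F(y)\in L$, and surjectivity of $F|_L$ yields $l'\in L$ with $F(l')=F(y)$; injectivity of $F$ on all of $M$ then forces $y=l'\in L$, hence $\bar y=0$.

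To show $F|_L$ is bijective I plan to reinterpret it as a Frobenius-semilinear endomorphism of a finite-dimensional vector space over the perfect field $k$. Since $L$ has finite length, $\fm^n L=0$ for some $n$, so $L$ is a module over the Artinian (hence complete) local ring $R/\fm^n$. The Cohen structure theorem, applied to $R/\fm^n$ (which is equicharacteristic $p$ with residue field $k$), produces a coefficient field isomorphic to $k$ inside $R/\fm^n$. Through this embedding $L$ becomes a finite-dimensional $k$-vector space of dimension $\length_R L$, and the identity $F(rl)=r^p F(l)$ restricts to $F(\alpha l)=\alpha^p F(l)$ for all $\alpha\in k$. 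Injectivity of $F$ on $M$ passes to $L$.

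The final step is then the standard linear-algebra fact that an injective Frobenius-semilinear endomorphism of a finite-dimensional vector space over a perfect field is automatically surjective: choosing a $k$-basis $v_1,\dots,v_m$ of $L$ and setting $w_i := F(v_i)$, one has $F(\sum \alpha_i v_i)=\sum \alpha_i^p w_i$; if the $w_i$'s were $k$-linearly dependent via $\sum \beta_i w_i=0$ with some $\beta_j\neq 0$, then (using perfectness of $k$ to extract $p$-th roots) $\sum \beta_i^{1/p}v_i$ would be a nonzero element of $\ker F|_L$, contradicting injectivity. Hence $\{w_i\}$ is again a $k$-basis, and any $v=\sum \beta_i w_i$ equals $F(\sum \beta_i^{1/p}v_i)$, giving surjectivity. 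I do not foresee a serious obstacle; the only conceptual point is to recognize that a finite length module in equicharacteristic $p$ carries a natural finite-dimensional $k$-vector space structure on which the Frobenius acts semilinearly, after which the argument reduces to elementary linear algebra together with the injectivity of $F$ on $M$.
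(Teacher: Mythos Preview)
Your argument is correct and follows the same overall strategy as the paper: show $F|_L$ is bijective, then use surjectivity of $F|_L$ together with injectivity of $F$ on $M$ to conclude that $F$ is injective on $M/L$. The one point of divergence is how you place a $k$-vector-space structure on $L$. You invoke the Cohen structure theorem to produce a coefficient field inside $R/\fm^n$. The paper instead observes directly that $\fm L=0$: for $l\in L$ and $r\in\fm$ one has $F^e(rl)=r^{p^e}l\in\fm^{[p^e]}L=0$ for $e\gg0$ by finite length, and injectivity of $F$ forces $rl=0$. Thus $L$ is already a $k$-vector space with no appeal to Cohen, and the semilinear linear-algebra step (injective $p$-semilinear endomorphism of a finite-dimensional space over a perfect field is bijective) is then the same as yours. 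Your route works, but the paper's shortcut is lighter and yields the extra structural fact that $L$ is annihilated by $\fm$.
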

\begin{proof}
First we note that $L$ is killed by $\fm$: suppose $x\in L$, then $F^e(\fm\cdot x)=\fm^{[p^e]}\cdot x=0$ for $e\gg0$ since $L$ has finite length. But then $\fm\cdot x=0$ since $F$ acts injectively. Now we have a Frobenius action $F$ on a $k$-vector space $L$. Call the image of $L'\subseteq L$ (which is a $k^p$-vector subspace of $L$). Since $F$ is injective, the $k^p$-vector space dimension of $L'$ is equal to the $k$-vector space dimension of $L$. But since $k^p=k$, this implies $L'=L$ and thus $F$ is surjective, hence $F$ is bijective. Now by the injectivity of $F$ again we have $F(x) \notin L$ for all $x \notin L$. Thus $F: M/L \to M/L$ is injective.
\end{proof}

\begin{example}

The perfectness of the residue field in Lemma \ref{injec quotient} is necessary.  Let $A = \mathbb{F}_p[t]$ and $R =k= \mathbb{F}_p(t)$, where $t$ is an indeterminate. We consider the Frobenius action on the $A$-module $Ae_1 \oplus Ae_2$ defined by
  $$F(f(t),g(t)) = (f(t)^p + t g(t)^p,0).$$
It is clear that $F$ is injective. Moreover, $Ae_1 \oplus 0$ is an $F$-stable submodule of $Ae_1 \oplus Ae_2$. Since $F(Ae_1 \oplus Ae_2) \subseteq Ae_1 \oplus 0$, the induced Frobenius action on $(Ae_1 \oplus Ae_2)/(Ae_1 \oplus 0)$ is the zero map. By localizing, we obtain an injective Frobenius action on $M = k\cdot e_1 \oplus k \cdot e_2$ with $L = k\cdot e_1 \oplus 0$ is an $F$-stable submodule of finite length, but the induced Frobenius action on $M/L$ is not injective.
\end{example}

The following is a generalization of the main result of \cite{HMS14} when $R/\fm$ is perfect.

\begin{theorem}\label{stri filt}
Let $(R, \fm)$ be a Noetherian local ring of characteristic $p>0$. Suppose the residue field $k = R/\fm$ is perfect. Let $x$ be a strictly filter regular element. If $R/(x)$ is $F$-injective, then the map $x^{p-1}F$: $H_{\fm}^i(R)\to H_{\fm}^i(R)$ is injective for every $i$, in particular $R$ is $F$-injective.
\end{theorem}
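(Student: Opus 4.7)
The plan is to modify the proof of Proposition \ref{crutical tech} to handle the fact that $x$ is only strictly filter regular rather than a surjective element, by exploiting the perfect residue field hypothesis via Lemma \ref{injec quotient}.

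First I would fix $i \geq 1$ and consider the commutative diagram of long exact sequences induced by $0 \to R \xrightarrow{x} R \to R/(x) \to 0$ together with the vertical maps $(x^{p-1}F, F, F)$ exactly as in Proposition \ref{crutical tech}:
\[
\begin{CD}
\cdots @>>> H^{i-1}_{\fm}(R) @>x>> H^{i-1}_{\fm}(R) @>>> H^{i-1}_{\fm}(R/(x)) @>\alpha>> H^i_{\fm}(R) @>x>> H^i_{\fm}(R) @>>>\cdots \\
@. @VFVV @VFVV @VFVV @Vx^{p-1}FVV @VFVV \\
\cdots @>>> H^{i-1}_{\fm}(R) @>x>> H^{i-1}_{\fm}(R) @>>> H^{i-1}_{\fm}(R/(x)) @>\alpha>> H^i_{\fm}(R) @>x>> H^i_{\fm}(R) @>>>\cdots
\end{CD}
\]
Let $C_{i-1} := \coker(x: H^{i-1}_{\fm}(R) \to H^{i-1}_{\fm}(R))$, which by hypothesis has finite length. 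The exact sequence identifies $C_{i-1}$ with $\ker(\alpha)$, and the left-hand squares of the diagram show that $\ker(\alpha) \subseteq H^{i-1}_{\fm}(R/(x))$ is an $F$-stable submodule.

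The key step is to apply Lemma \ref{injec quotient} to $M = H^{i-1}_{\fm}(R/(x))$ and $L = C_{i-1} = \ker(\alpha)$: since $R/(x)$ is $F$-injective, $F$ acts injectively on $M$; since $L$ has finite length and $k$ is perfect, the induced Frobenius action on $M/L$ is injective. Since $\alpha$ factors through an injection $M/L \hookrightarrow H^i_{\fm}(R)$ that intertwines $F$ on $M/L$ with $x^{p-1}F$ on $H^i_{\fm}(R)$, we now run the socle argument from Proposition \ref{crutical tech}: given $y \in \ker(x^{p-1}F) \cap \mathrm{Soc}(H^i_{\fm}(R))$, we have $xy = 0$, so $y = \alpha(z)$ for some $z \in H^{i-1}_{\fm}(R/(x))$; then $\alpha(F(z)) = x^{p-1}F(\alpha(z)) = 0$, so $F(z) \in \ker(\alpha) = L$, i.e., $F(\bar{z}) = 0$ in $M/L$; by injectivity on $M/L$ this forces $\bar{z} = 0$, hence $z \in L = \ker(\alpha)$, so $y = \alpha(z) = 0$.

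Since $H^i_{\fm}(R)$ is Artinian, the vanishing $\ker(x^{p-1}F) \cap \mathrm{Soc}(H^i_{\fm}(R)) = 0$ implies $x^{p-1}F$ is injective on $H^i_{\fm}(R)$, which in turn implies $F$ is injective on $H^i_{\fm}(R)$; running this for all $i$ yields $F$-injectivity of $R$. The main obstacle is precisely the passage from ``$x$ surjective'' to ``$x$ strictly filter regular,'' and it is the perfect residue field hypothesis feeding into Lemma \ref{injec quotient} that bridges this gap by ensuring the finite length kernel $C_{i-1}$ does not destroy injectivity of the induced Frobenius on the quotient.
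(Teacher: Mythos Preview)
Your proof is correct and follows essentially the same route as the paper's own argument: identify $\ker(\alpha)\cong\coker\bigl(H^{i-1}_{\fm}(R)\xrightarrow{x}H^{i-1}_{\fm}(R)\bigr)$ as a finite-length $F$-stable submodule of $H^{i-1}_{\fm}(R/(x))$, apply Lemma~\ref{injec quotient} to obtain injectivity of $F$ on the quotient, and then run the socle argument from Proposition~\ref{crutical tech}. The paper phrases the passage to the quotient as a separate commutative diagram but the content is identical.
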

\begin{proof}
Let $L_i: = \coker (H^i_{\fm}(R) \overset{x}{\to} H^i_{\fm}(R))$, we have $L_i$ has finite length for all $i \ge 0$. The commutative diagram
$$
\begin{CD}
0 @>>>  R  @>x>> R @>>> R/(x)  @>>> 0\\
@. @Vx^{p-1}FVV @VFVV @VFVV \\
0 @>>>  R  @>x>> R @>>> R/(x) @>>> 0
\end{CD}
$$
induces the following commutative diagram
$$
\begin{CD}
0 @>>> L_{i-1} @>>> H^{i-1}_{\fm}(R/(x))  @>\phi>> H^i_{\fm}(R) @>x>> H^i_{\fm}(R) @>>> \cdots\\
@. @VFVV @VFVV @Vx^{p-1}FVV @VFVV  \\
0 @>>> L_{i-1} @>>>  H^{i-1}_{\fm}(R/(x))  @>\phi>> H^i_{\fm}(R) @>x>> H^i_{\fm}(R) @>>> \cdots.
\end{CD}
$$
Therefore we have the following commutative diagram
$$
\begin{CD}
0 @>>>  H^{i-1}_{\fm}(R/(x))/L_{i-1}  @>\alpha>> H^i_{\fm}(R) @>x>> H^i_{\fm}(R) @>>> \cdots\\
@. @VFVV @Vx^{p-1}FVV @VFVV \\
0 @>>>  H^{i-1}_{\fm}(R/(x))/L_{i-1}  @>\alpha>> H^i_{\fm}(R) @>x>> H^i_{\fm}(R) @>>> \cdots
\end{CD}
$$
with the Frobenius action $F: H^{i-1}_{\fm}(R/(x))/L_{i-1} \to H^{i-1}_{\fm}(R/(x))/L_{i-1}$ is injective by Lemma \ref{injec quotient}. Now by the same method as in the proof of Proposition \ref{crutical tech} or Theorem \ref{stably} (i), we conclude that the map $x^{p-1}F: H^i_{\fm}(R) \to H^i_{\fm}(R)$ is injective for all $i \ge 0$.
\end{proof}

Similarly, we have the following:

\begin{proposition}\label{stric filt 2}
Let $(R, \fm)$ be a Noetherian local ring of characteristic $p>0$. Suppose the residue field $k = R/\fm$ is perfect. Let $x$ be a regular element such that $R/(x)$ is $F$-injective. Let $s$ be a positive integer such that $H^{s-1}_{\fm}(R/(x))$ has finite length. Then the map $x^{p-1}F: H^{s+1}_{\fm}(R)  \to H^{s+1}_{\fm}(R)$ is injective.
\end{proposition}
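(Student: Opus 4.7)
The plan is to adapt the argument of Theorem \ref{stri filt} to the single degree $s+1$. The key technical obstacle will be to show that the cokernel $L_s := \coker(H^s_{\fm}(R) \xrightarrow{x} H^s_{\fm}(R))$ has finite length, even though only $H^{s-1}_{\fm}(R/(x))$ is assumed finite length; once this is established, Lemma \ref{injec quotient} plus a diagram chase finishes the proof.

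First I would apply $H^\bullet_{\fm}$ to the Frobenius-compatible diagram of short exact sequences
$$
\begin{CD}
0 @>>> R @>x>> R @>>> R/(x) @>>> 0 \\
@. @Vx^{p-1}FVV @VFVV @VFVV \\
0 @>>> R @>x>> R @>>> R/(x) @>>> 0
\end{CD}
$$
and identify $L_s$ with $\ker(\phi)\subseteq H^s_{\fm}(R/(x))$, where $\phi\colon H^s_{\fm}(R/(x)) \to H^{s+1}_{\fm}(R)$ is the connecting map; this realizes $L_s$ as an $F$-stable submodule of $H^s_{\fm}(R/(x))$. The same long exact sequence identifies $H^s_{\fm}(R)[x]$ with a quotient of $H^{s-1}_{\fm}(R/(x))$, so the hypothesis immediately gives that $H^s_{\fm}(R)[x]$ has finite length.

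To upgrade this to finite length of $L_s$, I would pass to Matlis duals. Since $\fm$-adic completion preserves local cohomology, its Frobenius action, and all hypotheses in the statement, I may assume $R$ is complete, so $R=A/I$ for a regular complete local ring $(A,\fn)$ of dimension $n$. Set $M_s := \Ext^{n-s}_A(R,A)$; local duality identifies $M_s$ with the Matlis dual of $H^s_{\fm}(R)$, and multiplication by $x$ is Matlis self-dual. Hence the Matlis dual of $M_s/xM_s$ is $H^s_{\fm}(R)[x]$, which we just showed has finite length, while the Matlis dual of $M_s[x]$ is $L_s$. Finite length of $M_s/xM_s$ gives $\Supp(M_s/xM_s)\subseteq \{\fm\}$; Nakayama applied at each $\fp\in \Supp(M_s)\setminus\{\fm\}$ forces $x\notin\fp$, and therefore $(M_s[x])_\fp = 0$ for every such $\fp$. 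Thus $M_s[x]$ is a finitely generated $\fm$-torsion $R$-module, of finite length, and dualizing back, $L_s$ has finite length as well.

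The conclusion now follows the template of Theorem \ref{stri filt}. Because $R/(x)$ is $F$-injective the natural $F$ acts injectively on $H^s_{\fm}(R/(x))$, and since $L_s$ is a finite length $F$-stable submodule and $k$ is perfect, Lemma \ref{injec quotient} yields that $F$ is injective on $H^s_{\fm}(R/(x))/L_s$. Given $y\in \mathrm{Soc}(H^{s+1}_{\fm}(R))\cap \ker(x^{p-1}F)$, the relation $\fm y=0$ gives $xy=0$, so $y=\phi(z)$ for some $z\in H^s_{\fm}(R/(x))$; the commuting square $\phi\circ F = x^{p-1}F\circ \phi$ then yields $\phi(F(z))=0$, i.e.\ $F(z)\in L_s$, and the injectivity just established forces $z\in L_s$, hence $y=0$. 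Since $\ker(x^{p-1}F)$ is an Artinian submodule of $H^{s+1}_{\fm}(R)$ with zero socle, it vanishes. The main obstacle is the finite-length claim for $L_s$; everything else is a routine reprise of Proposition \ref{crutical tech} and Theorem \ref{stri filt}.
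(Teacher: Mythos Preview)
Your proposal is correct and follows essentially the same route as the paper: reduce to completion, use Matlis/local duality to pass from finite length of $H^s_{\fm}(R)[x]$ to finite length of $L_s$ (the paper cites \cite[Theorem 2.4]{Mat86}, which is exactly the Nakayama-type argument you spell out with $M_s=\Ext_A^{n-s}(R,A)$), then invoke Lemma \ref{injec quotient} and the diagram chase of Proposition \ref{crutical tech}/Theorem \ref{stri filt}. The only difference is cosmetic presentation of the duality step.
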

\begin{proof} The short exact sequence
$$0 \to R \overset{x}{\to} R \to R/(x) \to 0$$
induces the exact sequence
$$\cdots \to H^{s-1}_{\fm}(R/(x)) \to H^s_{\fm}(R) \overset{x}{\to} H^s_{\fm}(R) \to H^{s}_{\fm}(R/(x)) \to H^{s+1}_{\fm}(R)  \to \cdots.$$
Since $H^{s-1}_{\fm}(R/(x))$ has finite length, so is $\ker(H^s_{\fm}(R) \overset{x}{\to} H^s_{\fm}(R))$. We claim that $$L_s: = \coker(H^s_{\fm}(R) \overset{x}{\to} H^s_{\fm}(R))$$ also has finite length: to see this we may assume $R$ is complete,  since $\ker(H^s_{\fm}(R) \overset{x}{\to} H^s_{\fm}(R))$ has finite length, this means $H^s_{\fm}(R)^\vee \overset{x}{\to} H^s_{\fm}(R)^\vee$ is surjective when localizing at any $\fp\neq\fm$. But by \cite[Theorem 2.4]{Mat86} this implies $H^s_{\fm}(R)^\vee \overset{x}{\to} H^s_{\fm}(R)^\vee$ is an isomorphism when localizing at any $\fp\neq\fm$. Thus $\ker(H^s_{\fm}(R)^\vee \overset{x}{\to} H^s_{\fm}(R)^\vee)$ has finite length which, after dualizing, shows that $\coker(H^s_{\fm}(R) \overset{x}{\to} H^s_{\fm}(R))$ has finite length.

We have proved $L_s=\coker(H^s_{\fm}(R) \overset{x}{\to} H^s_{\fm}(R))$ has finite length. Now the map $x^{p-1}F: H^{s+1}_{\fm}(R)  \to H^{s+1}_{\fm}(R)$ is injective by the same argument as in Theorem \ref{stri filt}.
\end{proof}

The following immediate corollary of the above proposition recovers (and in fact generalizes) results in \cite{HMS14}.
\begin{corollary}[\cite{HMS14}, Corollary 4.7]
  Let $(R, \fm)$ be a Noetherian local ring of characteristic $p>0$. Suppose the residue field $k = R/\fm$ is perfect. Let $x$ be a regular element such that $R/(x)$ is $F$-injective. Then the map $x^{p-1}F: H^{i}_{\fm}(R)  \to H^{i}_{\fm}(R)$ is injective for all $i \le f_{\fm}(R/(x))+1$. In particular, if $R/(x)$ is generalized Cohen-Macaulay, then $R$ is $F$-injective.
\end{corollary}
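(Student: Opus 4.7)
The plan is to derive this corollary as an essentially direct consequence of Proposition \ref{stric filt 2}, by setting $s = i-1$ and matching the finite length hypothesis on $H^{s-1}_{\fm}(R/(x))$ to the finiteness dimension condition $i-2 < f_{\fm}(R/(x))$. The only wrinkle is the restriction $s \ge 1$ in Proposition \ref{stric filt 2}, which forces us to handle $i=0$ and $i=1$ separately.

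First I would dispose of the small indices. For $i=0$, since $x$ is a nonzerodivisor, $\depth R \ge 1$ and so $H^0_{\fm}(R)=0$; the claim is then vacuous. For $i=1$, I apply Proposition \ref{crutical tech} with $s=1$: its hypothesis that $H^0_{\fm}(R) \xrightarrow{x} H^0_{\fm}(R)$ is surjective holds trivially because $H^0_{\fm}(R)=0$, and the injectivity of $F$ on $H^0_{\fm}(R/(x))$ is part of the assumption that $R/(x)$ is $F$-injective. This yields injectivity of $x^{p-1}F$ on $H^1_{\fm}(R)$.

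Next, for $2 \le i \le f_{\fm}(R/(x))+1$, I would apply Proposition \ref{stric filt 2} with $s = i-1 \ge 1$. The required hypothesis is that $H^{s-1}_{\fm}(R/(x)) = H^{i-2}_{\fm}(R/(x))$ has finite length. By the very definition of $f_{\fm}(R/(x))$ and the inequality $i-2 < f_{\fm}(R/(x))$, this module is finitely generated; since every local cohomology module of a finitely generated module supported at $\fm$ is Artinian, being finitely generated is equivalent to having finite length. Proposition \ref{stric filt 2} then gives the injectivity of $x^{p-1}F$ on $H^i_{\fm}(R)$, completing the first claim.

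Finally, for the ``in particular'' statement, if $R/(x)$ is generalized Cohen-Macaulay then $f_{\fm}(R/(x)) = \dim R/(x) = \dim R - 1$ since $x$ is regular, so our bound covers all $i \le \dim R$; for $i > \dim R$, $H^i_{\fm}(R) = 0$, so $x^{p-1}F$ is injective on every $H^i_{\fm}(R)$. The injectivity of $x^{p-1}F$ forces the injectivity of $F$ itself, since $F(y)=0$ immediately implies $x^{p-1}F(y)=0$; hence $R$ is $F$-injective. The argument is essentially bookkeeping once Proposition \ref{stric filt 2} is available, and the only conceptual obstacle is keeping track of the index shift $s = i-1$ and recognizing that the ``finite length'' hypothesis there is automatically met below the finiteness dimension.
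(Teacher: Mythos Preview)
Your proof is correct and matches the paper's intended approach: the paper simply labels this an ``immediate corollary of the above proposition'' (i.e., Proposition \ref{stric filt 2}) and gives no further argument. Your explicit handling of the edge cases $i=0$ and $i=1$---the former via $\depth R\ge 1$ and the latter via Proposition \ref{crutical tech}---is a useful bit of care that the paper leaves implicit, since Proposition \ref{stric filt 2} is stated only for positive $s$.
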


Because of the deep connections between $F$-injective and Du Bois singularities \cite{Sch09, BST16} and Remark \ref{DB analog}, we believe that it is rarely the case that an $F$-injective ring fails to be $F$-full (again, the only example we know this happens is \cite[Example 3.5]{MSS16}, which is based on the construction of \cite[Example 2.16]{EH08}). Therefore we introduce:

\begin{definition}
We say $(R,\fm)$ is {\it strongly $F$-injective} if $R$ is $F$-injective and $F$-full.
\end{definition}

\begin{remark}
In general we have: $F$-anti-nilpotent $\Rightarrow$ strongly $F$-injective $\Rightarrow$ $F$-injective. Moreover, when $R$ is Cohen-Macaulay, strongly $F$-injective is equivalent to $F$-injective.
\end{remark}

We can prove that strong $F$-injectivity deform.

\begin{corollary}\label{str deform}
Let $x$ be a regular element on  $(R,\fm)$. If $R/(x)$ is strongly $F$-injective, then $R$ is strongly $F$-injective.
\end{corollary}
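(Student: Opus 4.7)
The plan is to observe that this corollary is essentially a packaging of two results already established in the paper, one for each of the two conditions that make up strong $F$-injectivity. Recall that by definition, $R/(x)$ strongly $F$-injective means $R/(x)$ is both $F$-full and $F$-injective, and we must verify both properties for $R$ itself.

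First I would handle $F$-fullness. This is immediate: since $R/(x)$ is $F$-full, Theorem \ref{stably}(ii) applies directly to give that $R$ is also $F$-full. No further work is required for this half of the conclusion.

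Next I would handle $F$-injectivity. Since $R/(x)$ is $F$-full, Proposition \ref{full imp surj} tells us that $x$ is a surjective element of $R$. Combined with the hypothesis that $R/(x)$ is $F$-injective, this puts us in the situation of Corollary \ref{sur imp injectivity}(iii) (which is stated precisely as: if $R/(x)$ is $F$-injective and $F$-full, then $R$ is $F$-injective), and we conclude that $R$ is $F$-injective. Alternatively one can appeal to part (ii) of the same corollary using the surjectivity of $x$ directly.

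Combining the two conclusions, $R$ is both $F$-full and $F$-injective, i.e., strongly $F$-injective. There is no real obstacle here: the content lies entirely in Theorem \ref{stably}(ii) and Corollary \ref{sur imp injectivity}, and this corollary merely records the compatibility of those two deformation statements under the combined hypothesis. The only thing worth double-checking is that the definition of strongly $F$-injective truly requires both $F$-fullness and $F$-injectivity (it does, by the definition given just before the statement), so that the two inputs cover exactly what is needed.
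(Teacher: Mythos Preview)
Your proof is correct and matches the paper's own argument essentially line for line: the paper invokes Corollary \ref{sur imp injectivity}(iii) for $F$-injectivity and Theorem \ref{stably}(ii) for $F$-fullness, exactly as you do. The only cosmetic difference is the order in which the two ingredients are cited.
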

\begin{proof}
We know $R$ is $F$-injective by Corollary \ref{sur imp injectivity} (iii). But we also know $R$ is $F$-full by Theorem \ref{stably} (ii). This shows that $R$ is strongly $F$-injective.
\end{proof}

\end{document}